\definecolor{myred}{RGB}{251,154,133}
\definecolor{myblue}{RGB}{153,206,227}
\definecolor{mylightblue}{RGB}{0, 150, 255}
\definecolor{mygreen}{RGB}{32, 210, 64}
\definecolor{mygray}{RGB}{220, 220, 220}
\tikzset{snake it/.style={decorate, decoration=snake}}
\newtheorem{theorem}{Theorem}
\newtheorem{lemma}{Lemma}
\newtheorem{remark}{Remark}[section]
\newtheorem{corollary}{Corollary}[section]
\newtheorem{open_problem}{Open Problem}
\def\beq{ \begin{equation} }
\def\eeq{ \end{equation} }
\def\ep{\varepsilon}
\def\square{\vcenter{\vbox{\hrule height .4pt
  \hbox{\vrule width .4pt height 5pt \kern 5pt
        \vrule width .4pt} \hrule height .4pt}}}
\newcommand{\BN}{{\mathbb{N}}}
\newcommand{\BZ}{{\mathbb{Z}}}
\newcommand{\prob}{{\bf P}}
\newcommand{\bae}{\begin{equation}\begin{aligned}}
\newcommand{\eae}{\end{aligned}\end{equation}}
\DeclareFontFamily{OML}{rsfs}{\skewchar\font'177}
\DeclareFontShape{OML}{rsfs}{m}{n}{ <5> <6> rsfs5 <7> <8> <9>
rsfs7 <10> <10.95> <12> <14.4> <17.28> <20.74> <24.88> rsfs10 }{}
\DeclareMathAlphabet{\mathfs}{OML}{rsfs}{m}{n}
\pgfplotsset{compat=1.18}
\newcommand{\abs}[1]{\left| #1 \right|}
\newcommand{\rb}[1]{\left( #1 \right)}
\newcommand{\bb}[1]{\left[ #1 \right]}
\newcommand{\cb}[1]{\left\{ #1 \right\}}
\newcommand{\ncon}[3]{#1 \overset{#3}{\nleftrightarrow} #2}
\newcommand{\con}[3]{#1 \overset{#3}{\leftrightarrow} #2}
\newcommand{\oov}[1]{\frac{1}{#1}}
\newcommand{\sumn}[1]{\sum_{i=1}^{n}{#1}}
\DeclareMathOperator*{\argmax}{arg\,max} 
\DeclareMathOperator*{\argmin}{arg\,min}
\newcommand{\Prob}[2][]{\mathbb{P}_{#1}\left(#2\right)}
\begin{document}

\title{On one-dimensional Cluster cluster model}

\author{Noam Berger}
\address[Noam Berger]{Department of mathematics, TUM}
\urladdr{https://www.math.cit.tum.de/en/probability/people/berger/}
\email{noam.berger@tum.de }

\author{Eviatar B. Procaccia}
\address[Eviatar B. Procaccia]{Faculty of data and decision sciences, Technion}
\urladdr{https://procaccia.net.technion.ac.il}
\email{eviatarp@technion.ac.il}

\author{Daniel Sharon}
\address[Daniel Sharon]{Faculty of data and decision sciences, Technion}
\email{danielsharon@campus.technion.ac.il}

\begin{abstract}
The Cluster-cluster model was introduced by Meakin et al in 1984. 
Each $x\in \mathds{Z}^d$ starts with a cluster of size 1 with probability $p \in (0,1]$ independently. 
Each cluster $\mathfs{C}$ performs a continuous-time SRW with rate $\abs{\mathfs{C}}^{-\alpha}$. 
If it attempts to move to a vertex occupied by another cluster, it does not move, and instead the two clusters connect via a new edge.

Focusing on dimension $d=1$, we show that for $\alpha>-2$, at time $t$, the cluster size is of order $t^\frac{1}{\alpha + 2}$, and for $\alpha < -2$ we get an infinite cluster in finite time a.s. 
Additionally, for $\alpha = 0$ we show convergence in distribution of the scaling limit.

\end{abstract}
\maketitle
\tableofcontents

\section{Introduction}


The Cluster-cluster model was introduced by Meakin et al in 1984 \cite{meakin1984diffusion} and and studied in the physics literature \cite{meakin1984effects, meakin1985dynamic,rajesh2024exact}. The model was introduced as a more realistic model than Diffusion Limited Aggregation (DLA) for colloidal systems exhibiting flocculation, such as blood coagulation \cite{leyvraz2003scaling}, cloud formation \cite{lushnikov2006gelation}, aerosol dynamics \cite{lushnikov1978coagulation} and protein aggregation \cite{frieden2007protein}.
In this paper we present the first rigorous mathematical treatment of the model. 

Denote $\mathcal{E}(\BZ^d)$ the set of edges of $\BZ^d$. Abbreviate $B_r(x):=\{y\in\BZ^d: \|y-x\|_\infty\le r\}$.
We are interested in a stochastic process $$\{\mathfs{A}_t\}_{t\ge 0}=\{\mathfs{V}_t\times \mathfs{E}_t\}_{t\ge 0}$$ over $\{0,1\}^{\BZ^d}\times \{0,1\}^{\mathcal{E}(\BZ^d)}$. 

A Cluster $\mathfs{C}$ is a maximal set of open vertices which are connected by edge paths together with all open edges between vertices. We denote by $\partial \mathfs{C}$ the $\BZ^d$-edge boundary of the vertices in $\mathfs{C}$, and by $\partial_{\mathfs{E}_t}\mathfs{C}$ the edge boundary in the sub-graph of open vertices and edges given by $\mathfs{A}_t$. 

$\{\mathfs{A}_t^\alpha\}_{t\ge 0}$ is a Cluster cluster model with parameter $\alpha$ if it posseses the following properties:
\begin{enumerate}
\item $\mathfs{A}_0$ is samples from a product measure with marginal distribution $\prob(\mathfs{A}_0(v)=1)=p$. The edge set is empty at initiation, $\mathfs{E}_0=\emptyset$. 
\item Every cluster $\mathfs{C}$ has a Poisson clock at rate $|\mathfs{C}|^{-\alpha}$.
\item 
    \begin{itemize} 
        \item ($d\ge 2$): Upon ringing a cluster attempts to preform a random walk move (all vertices and edges of the cluster move together). If destination is vacant, then cluster preforms the move. Otherwise, choose independently, uniformly, an edge among the edges connecting to clusters inhibiting the move (and thus connecting two clusters).
        \item ($d=1$): Whenever two clusters are of distance $1$, clusters connect via the edge between them. 
    \end{itemize}
\end{enumerate}

\begin{figure}
    \centering
    \includegraphics[width=0.5\linewidth]{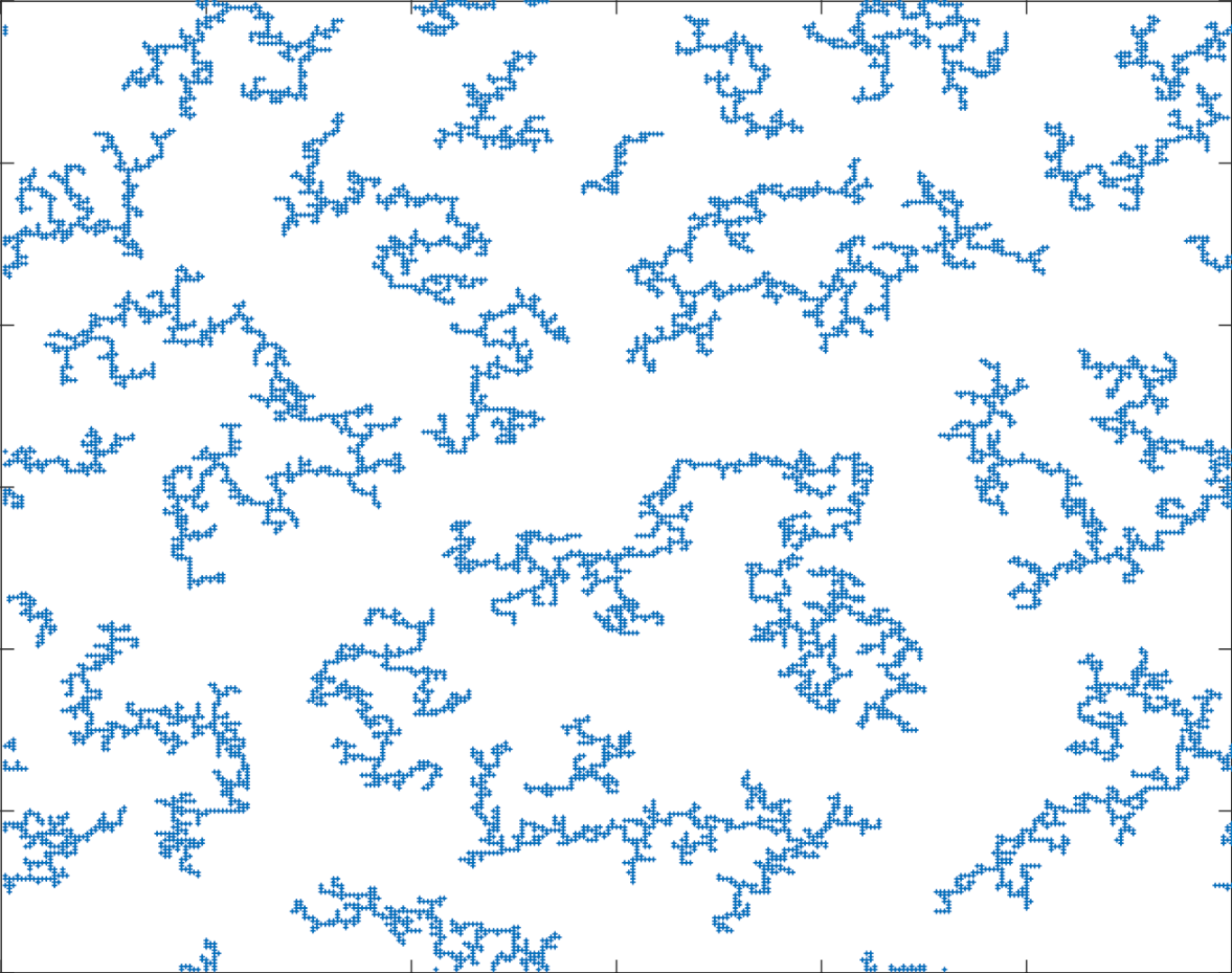}
    \caption{Cluster-cluster model on a torus with $\alpha=1$.}
    \label{fig:enter-label}
\end{figure}

\begin{figure}[ht]
    \centering

    \begin{minipage}{0.24\textwidth}
        \centering
        \begin{tikzpicture}[yscale=0.5,xscale=0.5]
            \draw[step=1cm,blue,thin, dotted] (0,0) grid (6,6);
            \node at (1,0) [circle,fill=black,inner sep=1pt]{};
            \draw [black,very thick] (1,0) -- (1,1);
            \node at (1,1) [circle,fill=black,inner sep=1pt]{};
            \draw [black,very thick] (1,1) -- (1,2);
            \node at (1,2) [circle,fill=black,inner sep=1pt]{};
            \draw [black,very thick] (1,2) -- (1,3);
            \node at (1,3) [circle,fill=black,inner sep=1pt]{};
            \draw [black,very thick] (1,2) -- (2,2);
            \node at (2,2) [circle,fill=black,inner sep=1pt]{};
            \draw [black,very thick] (2,2) -- (2,3);
            \node at (2,3) [circle,fill=black,inner sep=1pt]{};
            \node at (2,0) [circle,fill=black,inner sep=1pt]{};
            \draw [black,very thick] (2,0) -- (2,1);
            \node at (2,1) [circle,fill=black,inner sep=1pt]{};
            \draw [black,very thick] (2,1) -- (3,1);
            \node at (3,1) [circle,fill=black,inner sep=1pt]{};
            \node at (6,2) [circle,fill=black,inner sep=1pt]{};
            \node at (5,0) [circle,fill=black,inner sep=1pt]{};
            \node at (5,1) [circle,fill=black,inner sep=1pt]{};
            \draw [black,very thick] (5,0) -- (5,1);
            \draw [black,very thick] (5,1) -- (5,2);
            \node at (5,2) [circle,fill=black,inner sep=1pt]{};
            \draw [black,very thick] (5,2) -- (6,2);
            \draw [black,very thick] (5,2) -- (5,3);
            \node at (5,3) [circle,fill=black,inner sep=1pt]{};
            \draw [black,very thick] (5,3) -- (5,4);
            \node at (5,4) [circle,fill=black,inner sep=1pt]{};
            \draw [black,very thick] (5,4) -- (6,4);
            \node at (6,4) [circle,fill=black,inner sep=1pt]{};
            \draw [black,very thick] (0,4) -- (1,4);
            \node at (1,4) [circle,fill=black,inner sep=1pt]{};
            \draw [black,very thick] (1,4) -- (2,4);
            \node at (2,4) [circle,fill=black,inner sep=1pt]{};
            \draw [black,very thick] (2,4) -- (3,4);
            \node at (3,4) [circle,fill=black,inner sep=1pt]{};
        \end{tikzpicture}
    \end{minipage}%
    \hfill
    \begin{minipage}{0.24\textwidth}
        \centering
        \begin{tikzpicture}[yscale=0.5,xscale=0.5]
            \draw[step=1cm,blue,thin, dotted] (0,0) grid (6,6);
            \node at (1,0) [circle,fill=black,inner sep=1pt]{};
            \draw [black,very thick] (1,0) -- (1,1);
            \node at (1,1) [circle,fill=black,inner sep=1pt]{};
            \draw [black,very thick] (1,1) -- (1,2);
            \node at (1,2) [circle,fill=black,inner sep=1pt]{};
            \draw [black,very thick] (1,2) -- (1,3);
            \node at (1,3) [circle,fill=black,inner sep=1pt]{};
            \draw [black,very thick] (1,2) -- (2,2);
            \node at (2,2) [circle,fill=black,inner sep=1pt]{};
            \draw [black,very thick] (2,2) -- (2,3);
            \node at (2,3) [circle,fill=black,inner sep=1pt]{};
            \node at (2,0) [circle,fill=black,inner sep=1pt]{};
            \draw [black,very thick] (2,0) -- (2,1);
            \node at (2,1) [circle,fill=black,inner sep=1pt]{};
            \draw [black,very thick] (2,1) -- (3,1);
            \node at (3,1) [circle,fill=black,inner sep=1pt]{};
            \node at (4,0) [circle,fill=black,inner sep=1pt]{};
            \node at (4,1) [circle,fill=black,inner sep=1pt]{};
            \draw [black,very thick] (4,0) -- (4,1);
            \draw [black,very thick] (4,1) -- (4,2);
            \node at (4,2) [circle,fill=black,inner sep=1pt]{};
            \draw [black,very thick] (4,2) -- (5,2);
            \node at (5,2) [circle,fill=black,inner sep=1pt]{};
            \draw [black,very thick] (4,2) -- (4,3);
            \node at (4,3) [circle,fill=black,inner sep=1pt]{};
            \draw [black,very thick] (4,3) -- (4,4);
            \node at (4,4) [circle,fill=black,inner sep=1pt]{};
            \draw [black,very thick] (4,4) -- (5,4);
            \node at (5,4) [circle,fill=black,inner sep=1pt]{};
            \draw [black,very thick] (0,4) -- (1,4);
            \node at (1,4) [circle,fill=black,inner sep=1pt]{};
            \draw [black,very thick] (1,4) -- (2,4);
            \node at (2,4) [circle,fill=black,inner sep=1pt]{};
            \draw [black,very thick] (2,4) -- (3,4);
            \node at (3,4) [circle,fill=black,inner sep=1pt]{};
        \end{tikzpicture}
    \end{minipage}%
    \hfill
    \begin{minipage}{0.24\textwidth}
        \centering
        \begin{tikzpicture}[yscale=0.5,xscale=0.5]
            \draw[step=1cm,blue,thin, dotted] (0,0) grid (6,6);
            \node at (1,0) [circle,fill=black,inner sep=1pt]{};
            \draw [black,very thick] (1,0) -- (1,1);
            \node at (1,1) [circle,fill=black,inner sep=1pt]{};
            \draw [black,very thick] (1,1) -- (1,2);
            \node at (1,2) [circle,fill=black,inner sep=1pt]{};
            \draw [black,very thick] (1,2) -- (1,3);
            \node at (1,3) [circle,fill=black,inner sep=1pt]{};
            \draw [black,very thick] (1,2) -- (2,2);
            \node at (2,2) [circle,fill=black,inner sep=1pt]{};
            \draw [black,very thick] (2,2) -- (2,3);
            \node at (2,3) [circle,fill=black,inner sep=1pt]{};
            \node at (2,0) [circle,fill=black,inner sep=1pt]{};
            \draw [black,very thick] (2,0) -- (2,1);
            \node at (2,1) [circle,fill=black,inner sep=1pt]{};
            \draw [black,very thick] (2,1) -- (3,1);
            \node at (3,1) [circle,fill=black,inner sep=1pt]{};
            \node at (4,0) [circle,fill=black,inner sep=1pt]{};
            \node at (4,1) [circle,fill=black,inner sep=1pt]{};
            \draw [black,very thick] (4,0) -- (4,1);
            \draw [black,very thick] (4,1) -- (4,2);
            \node at (4,2) [circle,fill=black,inner sep=1pt]{};
            \draw [black,very thick] (4,2) -- (5,2);
            \node at (5,2) [circle,fill=black,inner sep=1pt]{};
            \draw [black,very thick] (4,2) -- (4,3);
            \node at (4,3) [circle,fill=black,inner sep=1pt]{};
            \draw [black,very thick] (4,3) -- (4,4);
            \node at (4,4) [circle,fill=black,inner sep=1pt]{};
            \draw [black,very thick] (4,4) -- (5,4);
            \node at (5,4) [circle,fill=black,inner sep=1pt]{};
            \draw [black,very thick] (0,4) -- (1,4);
            \node at (1,4) [circle,fill=black,inner sep=1pt]{};
            \draw [black,very thick] (1,4) -- (2,4);
            \node at (2,4) [circle,fill=black,inner sep=1pt]{};
            \draw [black,very thick] (2,4) -- (3,4);
            \node at (3,4) [circle,fill=black,inner sep=1pt]{};
            \draw [red,dotted,very thick] (3,1) -- (4,1);
            \draw [red,dotted,very thick] (3,4) -- (4,4);
        \end{tikzpicture}
    \end{minipage}%
    \hfill
    \begin{minipage}{0.24\textwidth}
        \centering
        \begin{tikzpicture}[yscale=0.5,xscale=0.5]
            \draw[step=1cm,blue,thin, dotted] (0,0) grid (6,6);
            \node at (1,0) [circle,fill=black,inner sep=1pt]{};
            \draw [black,very thick] (1,0) -- (1,1);
            \node at (1,1) [circle,fill=black,inner sep=1pt]{};
            \draw [black,very thick] (1,1) -- (1,2);
            \node at (1,2) [circle,fill=black,inner sep=1pt]{};
            \draw [black,very thick] (1,2) -- (1,3);
            \node at (1,3) [circle,fill=black,inner sep=1pt]{};
            \draw [black,very thick] (1,2) -- (2,2);
            \node at (2,2) [circle,fill=black,inner sep=1pt]{};
            \draw [black,very thick] (2,2) -- (2,3);
            \node at (2,3) [circle,fill=black,inner sep=1pt]{};
            \node at (2,0) [circle,fill=black,inner sep=1pt]{};
            \draw [black,very thick] (2,0) -- (2,1);
            \node at (2,1) [circle,fill=black,inner sep=1pt]{};
            \draw [black,very thick] (2,1) -- (3,1);
            \node at (3,1) [circle,fill=black,inner sep=1pt]{};
            \node at (4,0) [circle,fill=black,inner sep=1pt]{};
            \node at (4,1) [circle,fill=black,inner sep=1pt]{};
            \draw [black,very thick] (4,0) -- (4,1);
            \draw [black,very thick] (4,1) -- (4,2);
            \node at (4,2) [circle,fill=black,inner sep=1pt]{};
            \draw [black,very thick] (4,2) -- (5,2);
            \node at (5,2) [circle,fill=black,inner sep=1pt]{};
            \draw [black,very thick] (4,2) -- (4,3);
            \node at (4,3) [circle,fill=black,inner sep=1pt]{};
            \draw [black,very thick] (4,3) -- (4,4);
            \node at (4,4) [circle,fill=black,inner sep=1pt]{};
            \draw [black,very thick] (4,4) -- (5,4);
            \node at (5,4) [circle,fill=black,inner sep=1pt]{};
            \draw [black,very thick] (0,4) -- (1,4);
            \node at (1,4) [circle,fill=black,inner sep=1pt]{};
            \draw [black,very thick] (1,4) -- (2,4);
            \node at (2,4) [circle,fill=black,inner sep=1pt]{};
            \draw [black,very thick] (2,4) -- (3,4);
            \node at (3,4) [circle,fill=black,inner sep=1pt]{};
            \draw [black,very thick] (3,1) -- (4,1);
        \end{tikzpicture}
    \end{minipage}%
    \caption{Example of move step and connection step}
    \label{fig:all-configs}
\end{figure}
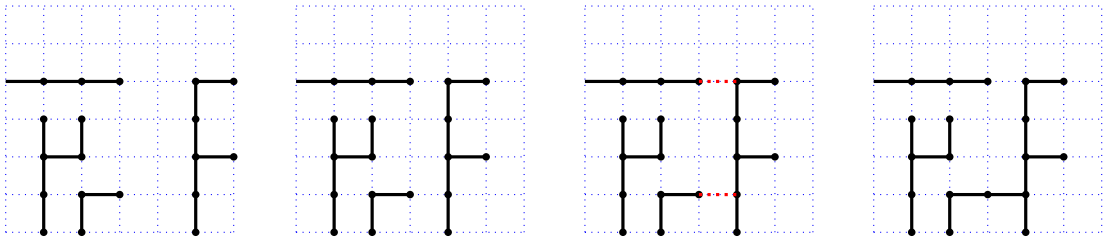
Note that in dimension $1$, the boundaries of clusters are always of size $2$, thus when a cluster moves, it can encounter at most one other cluster. Thus, one does not need a mechanism to choose connections such as in $d>1$.
In this paper, we focus on the one-dimensional variant of the model, and provide sharp growth bounds for the size of a single cluster.

\section{Results and Open Problems}
Denote $\mathfs{C}_0(t)$ the cluster started closest to $0$ at time $t$.
The first result is an exact cluster size limiting distribution for the case of homogeneous speed ($\alpha = 0$).
\begin{theorem}\label{thm:alpha=0}
    Let $d=1, \alpha = 0, 0<p<1$. $\forall x>0$:
    \begin{equation}\label{eq:party10}
        \begin{aligned}
            \lim_{t \to \infty}\Prob{\frac{|\mathfs{C}_0(t)|}{\sqrt{t}} \le x}=2\Phi\rb{\frac{x(\eta-1)}{2}} - \frac{x(\eta-1)}{\sqrt{2\pi}}e^{-\frac{\big(\frac{x}{2}(\eta-1)\big)^2}{2}} - 1
        ,\end{aligned}
    \end{equation}
    where $\Phi$ is the cdf of a $N(0,1)$ and $\eta = p^{-1}$.
\end{theorem}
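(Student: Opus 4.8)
\emph{Overview.} The plan is to show that, when $\alpha=0$, the one–dimensional model is (after contracting each cluster to a point) a system of coalescing continuous–time random walks started from a Bernoulli$(p)$ configuration, to analyse the relevant quantities by classical first–passage estimates, and to recover $|\mathfs{C}_0(t)|$ via a law of large numbers for the lengths of the runs that have been absorbed.

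\emph{Step 1: reduction to coalescing walks.} At time $0^{+}$ the clusters are the maximal runs of occupied sites; enumerate them $(\dots,\mathfs{C}_{-1},\mathfs{C}_0,\mathfs{C}_1,\dots)$ in spatial order, with $\mathfs{C}_0$ the run closest to $0$, let $\ell_i$ be the length of run $i$ and $g_i$ the number of empty sites between runs $i$ and $i+1$; then the $\ell_i$ are i.i.d.\ Geometric$(1-p)$, the $g_i$ are i.i.d.\ Geometric$(p)$, and these two families are independent of each other and of all the clocks. Contract each cluster to a point by assigning to a current cluster $\mathfs{C}$ its \emph{reduced position} $\rho(\mathfs{C})=(\text{left endpoint of }\mathfs{C})-(\text{total length of the clusters lying between }\mathfs{C}\text{ and }\mathfs{C}_0)$. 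One checks directly that (i) when a cluster performs a move only its own reduced position changes, by $\pm1$; (ii) a merge leaves every reduced position unchanged (here one uses that merging conserves occupied mass); and (iii) the gap between two consecutive clusters equals the difference of their reduced positions. Since $\alpha=0$, every cluster's clock rate is $1$ regardless of size, so the reduced positions $\{\rho_k(t)\}$ perform \emph{independent} continuous–time simple random walks (each with variance $2t$ at time $t$ under the rate convention of the model) that coalesce on meeting, started from $\rho_k(0)=\sum_{0\le i<k}g_i$, i.e.\ from a (shifted) Bernoulli$(p)$ point configuration. Writing $[a(t),b(t)]$ for the interval of run–labels currently belonging to $\mathfs{C}_0$, conservation of mass gives $|\mathfs{C}_0(t)|=\sum_{i=a(t)}^{b(t)}\ell_i$; set $N(t)=b(t)-a(t)+1$.

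\emph{Step 2: the joint limit law of $(-a(t),b(t))$.} The crucial observation is: for $\ell,m\ge0$, runs $-\ell$ and $m$ are in the same cluster at time $t$ $\iff$ $-a(t)\ge\ell$ and $b(t)\ge m$. Moreover, letting $\rho^{(j)}$ denote the reduced position of the cluster currently containing run $j$, the process $D^{(\ell,m)}:=\rho^{(m)}-\rho^{(-\ell)}$ is (because $\alpha=0$ makes each of these a genuine rate–$1$ walk for all time, because distinct clusters have independent clocks, and because coalescence is irreversible) a continuous–time random walk on $\mathbb Z$ with variance $4t$, started from $\sum_{-\ell\le i<m}g_i$ and absorbed at $0$; and the event above is exactly $\{D^{(\ell,m)}\text{ hits }0\text{ before }t\}$. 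Taking $\ell=\lfloor y\sqrt t\rfloor$, $m=\lfloor x\sqrt t\rfloor$, the law of large numbers gives $\sum_{-\ell\le i<m}g_i=(x+y)\sqrt t/p+O_{\mathbb P}(t^{1/4})$, so the reflection principle together with the invariance principle for the hitting time yields
\[
\mathbb P\!\left(\tfrac{-a(t)}{\sqrt t}\ge y,\ \tfrac{b(t)}{\sqrt t}\ge x\right)\ \xrightarrow[t\to\infty]{}\ 2\bigl(1-\Phi\bigl(\tfrac{x+y}{2p}\bigr)\bigr)=:G(x+y)\qquad(x,y\ge0).
\]
Since $G$ is continuous this gives $(-a(t)/\sqrt t,\ b(t)/\sqrt t)\Rightarrow(\mathcal A,\mathcal B)$ with joint survival function $G(x+y)$, hence $N(t)/\sqrt t\Rightarrow\mathcal A+\mathcal B$.

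\emph{Step 3: identification and conclusion.} A survival function of the product form $\mathbb P(\mathcal A\ge y,\mathcal B\ge x)=G(x+y)$ with $G$ convex forces $(\mathcal A,\mathcal B)$ to have density $G''(x+y)$ on the open quadrant (one verifies $\iint G''=1$ by the substitution $u=x+y$), and then $\mathcal A+\mathcal B$ has density $u\mapsto uG''(u)$. For $G(u)=2\bigl(1-\Phi(u/2p)\bigr)$ a short computation gives $uG''(u)=\dfrac{u^{2}}{4p^{3}\sqrt{2\pi}}e^{-u^{2}/8p^{2}}$, which is the density of $2p\,\chi_3$, where $\chi_3$ is the Euclidean norm of a standard Gaussian vector in $\mathbb R^{3}$. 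Next, conditionally on the coalescing dynamics the interval $[a(t),b(t)]$ is deterministic and $\sum_{i=a(t)}^{b(t)}\ell_i$ is a sum of $N(t)$ i.i.d.\ Geometric$(1-p)$ variables; since $N(t)\to\infty$ in probability (the walk $D^{(0,m)}$ from a fixed distance hits $0$ a.s.), Chebyshev gives $|\mathfs{C}_0(t)|/N(t)\to (1-p)^{-1}$ in probability, and Slutsky's lemma gives $|\mathfs{C}_0(t)|/\sqrt t=\bigl(N(t)/\sqrt t\bigr)\bigl(|\mathfs{C}_0(t)|/N(t)\bigr)\Rightarrow\frac{2p}{1-p}\chi_3$. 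Finally, using the distribution function $\mathbb P(\chi_3\le z)=2\Phi(z)-2z\phi(z)-1$ (where $\phi=\Phi'$), substituting $z=\frac{x(\eta-1)}{2}$ with $\eta=p^{-1}$, and noting $2z\phi(z)=\frac{x(\eta-1)}{\sqrt{2\pi}}e^{-(x(\eta-1)/2)^{2}/2}$, produces exactly \eqref{eq:party10}.

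\emph{Main obstacle.} The delicate points are all in Steps 1–2: making the reduction rigorous so that $D^{(\ell,m)}$ is genuinely a Markov random walk unaffected by the clusters lying between runs $-\ell$ and $m$ (this is precisely where $\alpha=0$ is used, as it keeps every cluster's clock rate equal to $1$), and obtaining the first–passage limit uniformly in $(x,y)$, including the boundary regime where $\ell$ or $m$ is small — there the limiting probability is close to $1$ — and controlling the $O_{\mathbb P}(t^{1/4})$ fluctuations of $\sum g_i$ around $(x+y)\sqrt t/p$ (handled by dominated convergence after conditioning on the $g_i$'s).
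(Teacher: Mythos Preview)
Your proof is correct and takes a genuinely different route from the paper's. The paper indexes everything by \emph{particles}: from translation invariance it derives the identity $\Prob{|\mathfs{C}_0(t)|=n+1}=(n+1)(\Delta_n-\Delta_{n+1})$ with $\Delta_n=\Prob{\con{0}{n}{t}}-\Prob{\con{0}{n+1}{t}}$, telescopes to express the CDF as $1-(n+1)\Delta_{n+1}-\Prob{\con{0}{n+1}{t}}$, and then evaluates each piece via the difference walk $X_{n+1}-X_0$ and the reflection principle/LCLT. You instead (i) pass to the run-indexed coalescing system, (ii) compute the \emph{joint} survival function of the left and right extents, which has the special form $G(x+y)$ and forces $\mathcal A+\mathcal B$ to have density $uG''(u)$, i.e.\ a scaled $\chi_3$, and (iii) recover the particle count from the run count by a LLN over the $\ell_i$. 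Your route is more structural --- it explains \emph{why} the limit is $\chi_3$, a fact the paper only records post hoc through the density formula in its Remark --- at the price of the extra LLN step and of having to argue that $[a(t),b(t)]$ is measurable with respect to $(g_i,\text{clocks})$ and hence independent of the run-lengths $\ell_i$ (be careful with the size-bias on the run labelled $0$; a Palm-type labelling together with translation invariance disposes of this). The paper's computation is more self-contained and avoids the run/particle conversion entirely, but is less transparent about the origin of the limiting law.
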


\begin{remark}
Note that \eqref{eq:party10} means that the density of the limiting distribution is 
\[
{\bf 1}_{x>0}\cdot\frac{\gamma x^2}{\sqrt{2\pi}} e^{-\gamma x^2/2}
\]
for $\gamma = (\eta -1)^2/4$, namely the distribution is a gaussian weighted by $x^2$.
\end{remark}

The second result pertains to the asymptotic growth rate.
\begin{theorem}\label{thm:growthRate1}
    Let $d=1, \alpha \ge 0$, $0<p<1$.
    $\forall \ep>0$, $\exists T(\ep)>0, c(\ep)>0$ s.t $\forall t>T$:
    \begin{equation}
        \Prob{\frac{1}{c}t^{\frac{1}{\alpha+2}} \le \abs{\mathfs{C}_0(t)} \le ct^{\frac{1}{\alpha+2}}} \ge 1-\ep
    .\end{equation}
\end{theorem}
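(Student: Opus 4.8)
We sketch how we would prove Theorem~\ref{thm:growthRate1}. The plan is to exploit the rigid one--dimensional geometry together with random--walk estimates for the ``boundary gap'' of $\mathfs{C}_0$, run inside a bootstrap that simultaneously tightens a lower and an upper bound on $\abs{\mathfs{C}_0(s)}$ until both match the exponent $\beta:=\tfrac1{\alpha+2}$.

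First I would record the structural facts special to $d=1$: since clusters at distance one connect instantly, every cluster is an interval of occupied sites, the clusters keep their left--to--right order forever, and cluster sizes are non--decreasing in time. Encoding a configuration by the ordered particle positions $Y_i(s)$ (with particle $0$ the one started nearest $0$) and the gaps $h_i(s):=Y_{i+1}(s)-Y_i(s)-1\ge0$, a cluster is a maximal run of vanishing gaps, a cluster move shifts one unit of gap from one external side to the other, and a merge happens exactly when an external gap hits $0$. Two preliminary ingredients follow quickly. (i) For $\alpha\ge0$ every cluster has rate $\le1$, so each particle makes at most a $\mathrm{Poisson}(t)$ number of $\pm1$ steps up to time $t$; a union bound over particles started in a polynomial window of the origin gives the crude two--sided estimate $1\le\abs{\mathfs{C}_0(t)}\le C_0\sqrt{t\log t}$ with high probability, and that every such particle has displacement $O(\sqrt{t\log t})$. (ii) A Chernoff bound for the $\mathrm{Bernoulli}(p)$ field, applied to all index--blocks of length $\ge C\log t$ in a polynomial window, shows that with high probability any $m$ consecutive particles originally span a length in $\big[(1+\delta)m,(1+\delta^{-1})m\big]$ for some $\delta=\delta(p)\in(0,1)$; so throughout the evolution the local gap density stays $\asymp(1-p)/p$, i.e.\ voids are always comparable to the sizes of the clusters flanking them.

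The core is then a multi--scale argument built on the observation that the boundary gap $h_{b}(s)$ between $\mathfs{C}_0(s)$ and its right neighbour performs a \emph{symmetric} continuous--time random walk whose instantaneous rate is the sum of the two flanking clusters' rates (hence of order $\abs{\mathfs{C}_0(s)}^{-\alpha}$, up to the neighbour's comparable--or--smaller rate), and each of its visits to $0$ is a merge on the right, after which the walk restarts from the next gap --- whose height, by (ii), is of order the current cluster size. From an a priori bound $\abs{\mathfs{C}_0(s)}\le\overline\phi(s)$ one gets a lower bound on this rate, and a local--time estimate for the time--changed symmetric walk, together with the controlled restart heights, lower--bounds the number of merges on each side up to time $t$, giving an improved lower bound $\abs{\mathfs{C}_0(t)}\ge\underline\phi'(t)$ (one must also check that not all nearby particles can escape the advancing boundary, which fails with exponentially small probability by (ii)). Dually, from $\abs{\mathfs{C}_0(s)}\ge\underline\phi(s)$ one bounds the rate from above and, via (ii), the restart heights from below, so that completing one excursion of the boundary gap costs time $\gtrsim\abs{\mathfs{C}_0(s)}^{\alpha+2}$; summing over scales caps the number of merges and yields an improved upper bound $\abs{\mathfs{C}_0(t)}\le\overline\phi'(t)$. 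On the level of power--law exponents this iteration is governed by $\gamma\mapsto(1-\alpha\gamma)/2$, whose unique fixed point is $\beta=\tfrac1{\alpha+2}$; iterating from the crude exponent $\tfrac12$, and using (ii) at each step to keep the neighbouring cluster sizes (hence the restart heights) comparable to $\abs{\mathfs{C}_0}$, drives both bounds to $\beta$, after which one final, sharper pass --- comparing $\abs{\mathfs{C}_0(s)}$ with the solution of $\dot M\asymp M^{-(\alpha+1)}$ --- upgrades the matched powers to the asserted bound with a multiplicative constant $c(\ep)$.

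The main obstacle is exactly this interdependence of the two bounds, and it is where the geometric input does the work. The lower bound could only fail if $\mathfs{C}_0$ were trapped between anomalously large (hence slow) clusters beyond anomalously wide voids, and the upper bound could only fail if $\mathfs{C}_0$ were surrounded by anomalously many small (hence fast) clusters that it rolls up too quickly; both are excluded because the density is pinned at $p$, forcing voids and flanking cluster sizes to stay comparable to $\abs{\mathfs{C}_0}$, while the monotonicity of cluster sizes prevents any regression to a smaller scale --- these are what make the bootstrap close. The remaining work is technical: making the local--time and excursion estimates uniform over the random, configuration--dependent Poisson clocks, and propagating all union bounds through the $O(\log t)$ scales and the polynomial spatial window.
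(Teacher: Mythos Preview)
Your route is quite different from the paper's, and considerably harder than necessary. For $\alpha\ge 0$ the paper obtains both bounds in one shot, with no bootstrap. The key device is to fix a particle index $n$ and track $X_n(s)-X_0(s)$: this is a symmetric nearest--neighbour walk, stepping whenever $\mathfs{C}_0$ or $\mathfs{C}_n$ moves, and it hits the value $n$ exactly at the connection time $0\overset{s}{\leftrightarrow}n$. The crucial point is that the \emph{number} of steps of this walk until absorption depends only on its starting value $X_n(0)-X_0(0)\asymp n/p$ (pure random--walk input), while the \emph{time} these steps take depends on the cluster sizes (via Lemma~\ref{lem:sizeBoundTime}). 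For the upper bound one takes $n=Ct$ (working with $t^{\alpha+2}$ as the time variable): with high probability $\mathfs{C}_0$ must make $\gtrsim (Ct)^2$ steps to reach particle $Ct$, but after its first $\sim t^2$ steps it has already swallowed particle $t$ (same random--walk estimate), so its rate drops to $\le t^{-\alpha}$ and the remaining $\sim C^2t^2$ steps cost time $\gtrsim C^2 t^{\alpha+2}$. For the lower bound the paper couples directly to the $\alpha=0$ process: as long as $|\mathfs{C}_0|<ct$, the walk $X_{ct}-X_0$ steps at least as fast (after rescaling time by $t^{-\alpha}$) as in the $\alpha=0$ model, so Theorem~\ref{thm:alpha=0} supplies the lower bound immediately. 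No iteration, no control of neighbours.

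Your scheme has two concrete gaps that would need real work to close. First, the ``restart height $\asymp|\mathfs{C}_0|$'' claim does not follow from your (ii): that estimate controls the \emph{total} gap among $m$ consecutive particles, not the individual gap that appears as the new boundary after a merge, which is the gap between the old right neighbour and \emph{its} right neighbour and has no a priori relation to $|\mathfs{C}_0|$. Second, and more seriously, your upper--bound step ``completing one excursion of the boundary gap costs time $\gtrsim|\mathfs{C}_0|^{\alpha+2}$'' uses that the gap walk's rate is $\lesssim|\mathfs{C}_0|^{-\alpha}$; but that rate is $|\mathfs{C}_0|^{-\alpha}+|\text{neighbour}|^{-\alpha}$, and the neighbour may be a singleton with rate $1$, so the gap can close on a time scale independent of $|\mathfs{C}_0|$. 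You would therefore need a separate argument that nearby clusters have already grown to comparable size, which your (ii) does not provide. The paper circumvents both issues by never looking at the boundary gap: it fixes the target index $n$ in advance, so the relevant walk is $X_n-X_0$, whose step--count is decoupled from all intermediate merges and whose step--time involves only $|\mathfs{C}_0|$ (on the event that $\mathfs{C}_0$ does at least half the walking). Your iteration $\gamma\mapsto(1-\alpha\gamma)/2$ is exactly what the paper uses for the \emph{lower} bound when $-2<\alpha<0$; for $\alpha\ge 0$ it is not needed.
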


\begin{theorem}\label{thm:growthRate2}
    Let $d=1, -2<\alpha<0$, $0<p<1$.
    $\forall \ep>0, \delta>0$, $\exists T(\ep,\delta)>0, c(\ep,\delta)>0$ s.t $\forall t>T$:
    \begin{equation}
        \Prob{\frac{1}{c}t^{\frac{1}{\alpha+2}-\delta} \le \abs{\mathfs{C}_0(t)} \le ct^{\frac{1}{\alpha+2}}} \ge 1-\ep
    .\end{equation}  
    For $\alpha < -2$:
    \begin{equation}
        \lim_{t \to \infty}{\Prob{\abs{\mathfs{C}_0(t)} < \infty}} = 0
    \end{equation}
\end{theorem}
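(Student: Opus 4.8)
The plan treats the $d=1$ dynamics as a coalescing system of intervals on $\ZZ$: a cluster of length $m$ (its mass equals its length) performs a rate-$m^{-\alpha}$ symmetric random walk, two clusters merge the instant they become adjacent, mass is globally conserved, and the mass of a \emph{tracked} cluster is nondecreasing. Write $m(t)=|\mathfs C_0(t)|$ with interval $[L(t),R(t)]$. Two elementary facts are used repeatedly. (a) If an initial cluster is eventually absorbed into $\mathfs C_0$, then at every earlier time it lies in a cluster of mass $\le m(t)$, hence the displacement of any of its seeds up to time $t$ is stochastically dominated by a time-changed walk whose clock rate never exceeds $m(t)^{-\alpha}$. (b) A rate-$r$ walk run for time $t$ moves by $\ge D$ with probability $\le 2\exp(-cD^2/(rt))$; at the self-similar scale $D\asymp m\asymp t^{1/(\alpha+2)}$, $r\asymp m^{-\alpha}$ this is $\exp(-\Theta(D^{\alpha+2}/t))$, which decays in $D$ precisely because $\alpha+2>0$.

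\emph{Upper bound, $-2<\alpha<0$.} On $\{m(t)\in(M,(1+\ep')M]\}$ the cluster has collected $(1+o(1))M$ initial sites spanning a ``footprint'' $[u_0,v_0]$; a Chernoff/union bound makes the density within $(1\pm\ep'')p$ on every mesoscopic subinterval of $[-C_p M,C_p M]$, so the footprint has width $\in[\tfrac{1-\ep''}{p+\ep''}M,\tfrac{1+\ep''}{p-\ep''}M]$, while the cluster's interval has width $\le (1+\ep')M$. Since $p<1$, for $\ep',\ep''$ small this forces the $u_0$- and $v_0$-seeds to have relative displacement $\ge\kappa(p)M>0$, so one of them moved net $\ge\kappa M/2$ while always lying in a cluster of mass $\le (1+\ep')M$; facts (a),(b) bound this. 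The honest difficulty is that a union bound over the $O(M)$ possible positions of that extreme seed costs a factor $\asymp t^{1/(\alpha+2)}$ that a constant $c$ cannot defeat, so a crude argument fails. I would instead bound the \emph{rate of accretion} of $\mathfs C_0$: conditionally on the configuration, the next absorption into $\mathfs C_0$ is driven by the gaps $d_\ell(s),d_r(s)$ to its two neighbours, and a separate lemma (using the same gap/coupling estimates behind Theorems~\ref{thm:alpha=0}--\ref{thm:growthRate1}) shows $d_\ell(s)\wedge d_r(s)\gtrsim m(s)/p$ off a summable-probability set; a Dynkin/Grönwall argument applied to $s\mapsto m(s)^{\alpha+2}$ (for which the doubling time is $\asymp m^{\alpha+2}$) then yields $\E{m(t)^{\alpha+2}}\le Ct$, and Markov's inequality gives the stated bound with failure probability $\to0$ as $c\to\infty$. (A cleaner route, granted monotonicity of the coalescing flow in the jump rates: compare with the rate-capped model where clusters of size $>M$ move at rate $M^{-\alpha}$; there all rates are $\le M^{-\alpha}$, reducing matters to Theorem~\ref{thm:alpha=0} time-scaled by $M^{-\alpha}$, and a finite-propagation-speed estimate transfers the conclusion to the true model until the first size-$M$ exceedance.) This accretion/coupling step is the main obstacle.

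\emph{Lower bound, $-2<\alpha<0$.} A self-improving (``snowball'') argument run on $\mathfs C_0$ itself. Fix small $\delta'>0$, set $m_k=2^{k}$ and $\Delta_k=m_k^{\alpha+2+\delta'}$. On a high-probability environment event (mass density $\ge p/2$ near $\mathfs C_0$ on mesoscopic scales, neighbour gaps $O(m_k p^{-1}\,\mathrm{polylog})$), I claim $m(s_k)\ge m_k$ implies $m(s_k+\Delta_k)\ge m_{k+1}$ with probability $\ge 1-C2^{-\delta'k/2}$: the gap from $\mathfs C_0$ to its right neighbour is a symmetric walk of rate $\ge m_k^{-\alpha}$ started at $\lesssim m_k p^{-1}\,\mathrm{polylog}$, which reaches $0$ within the allotted time $\Delta_k$ with that probability (the slack being the factor $m_k^{\delta'}$), and any cluster $\mathfs C_0$ meets is either smaller than $m_k$, hence slower and eventually caught, or already of size $\ge m_k$, in which case absorbing it over-achieves the doubling. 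Since $\sum_k\Delta_k\asymp 2^{k(\alpha+2+\delta')}$ and $\sum_k 2^{-\delta'k/2}<\infty$, iterating from $m_0=\mathrm{polylog}(t)$ gives, for large $t$, $m(t)\ge t^{1/(\alpha+2+\delta')}\ge t^{1/(\alpha+2)-\delta}$ with probability $\ge1-\ep$; the loss $\delta$ is exactly the price of the inflated doubling times.

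\emph{Explosion, $\alpha<-2$.} Now $\alpha+2<0$, so the same snowball has $\Delta_k=m_k^{\alpha+2+\delta'}\to0$ and $\sum_k\Delta_k<\infty$: a cluster of some large fixed size $M_0$ with a typical environment reaches infinite mass by a finite random time $\le CM_0^{\alpha+2}$ with probability $\ge1-\rho(M_0)$, $\rho(M_0)\to0$; the within-cascade control (the snowballing cluster neither drifts into an atypically empty stretch nor has its target mass stolen) is a union bound over the summably many cascade steps against exponentially small density deviations. Such seeds are plentiful: at time $0^+$ the disjoint blocks $[3jM_0,3jM_0+M_0)$, $j\ge1$, are each fully occupied with probability $p^{M_0}>0$ independently, so infinitely many size-$M_0$ seeds appear a.s. Exploring seeds outward from the origin — merging only accelerates a cascade, so interference between cascades only helps — at least one explodes a.s.; quantitatively, for any $t$ the probability that some seed within distance $D(t)$ of the origin has a typical environment, and has thus exploded by time $t$, tends to $1$. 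A diverging cluster has diverging extent \emph{and} diverging speed, so it engulfs the origin before its blow-up time (placing seeds on both sides of $0$ makes this explicit), whence $\lim_{t\to\infty}\Prob{|\mathfs C_0(t)|<\infty}=0$.
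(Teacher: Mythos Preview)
Your lower bound and the explosion argument share the paper's core idea --- an iterative self-improvement in which each new size bound speeds up the next leg --- but the paper parametrizes it differently and more robustly. Rather than tracking mass doublings $m_k=2^k$, the paper fixes target \emph{particles} at distances $C_n t^{\gamma_n}$ (with $\gamma_1=\tfrac12$, $\gamma_{n+1}=\tfrac{1-\alpha\gamma_n}{2}\to\tfrac{1}{\alpha+2}$) and bounds, for each leg, the time for $\mathfs C_0$ to connect to the next target. The key decoupling is that the \emph{number of steps} $S_0^x$ needed to connect is determined by the initial spacings and the embedded simple walk, independently of $\alpha$, while the step \emph{times} contract once $\mathfs C_0$ has absorbed the previous target (Lemma~\ref{lem:sizeBoundTime}). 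This sidesteps the environment-regularity and ``gap to neighbour $\lesssim m_k p^{-1}\,\mathrm{polylog}$'' assumptions your doubling scheme leans on; in your version, each doubling may require $\sim m_k/\mathrm{polylog}$ separate absorptions whose individual control you do not provide. For $\alpha<-2$ the paper runs the \emph{same} iteration for all $n$ simultaneously (now $\gamma_n\to\infty$ and the constraints on $C_n,a_n$ hold uniformly once $t$ is large), concluding directly that $\mathfs C_0(t)$ itself is infinite; your remote-seed/engulfment detour is unnecessary.

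The genuine gap is in your upper bound. The Dynkin/Gr\"onwall route rests on the asserted lemma $d_\ell(s)\wedge d_r(s)\gtrsim m(s)/p$, which is neither proved nor a consequence of Theorems~\ref{thm:alpha=0}--\ref{thm:growthRate1}; and even granting it, the accretion rate of $\mathfs C_0$ depends on the \emph{size} of the neighbour (for $\alpha<0$ large clusters are fast, so a neighbour of mass $m'\gg m$ closes the gap in time $\ll m^{\alpha+2}$ and adds mass $m'$), so $\tfrac{d}{dt}m^{\alpha+2}\lesssim 1$ does not follow from a gap bound alone. Your ``cleaner route'' --- compare with a rate-capped model --- is exactly what the paper does, but the step you dismiss as a ``finite-propagation-speed estimate'' is the crux and is not light. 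The paper first proves the upper bound for the capped process $\mathfs A^M$ (Lemma~\ref{lem:M bounded size alpha<0}) via an inductive case analysis over reference particles $j_0,j_1,\dots$, tracking at each stage which of two adjacent reference clusters did the majority of the walking (events $B_n^\ell,B_n^r$) and whether the next connection $G_{n+1}$ occurred, and only then carries out a separate discrepancy/coupling construction (Theorem~\ref{thm:negative alpha limit construction}) showing the capped processes stabilise on compacts as $M\to\infty$, which is what transfers the bound to the true dynamics. Neither of these two pieces is sketched in your proposal.
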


the following corollary is an easy consequence of the proof of Theorem \ref{thm:growthRate2} using the ergodicity of the model, and the fact that the proof provides positive probability conditions for blow up in any finite time.

\begin{corollary}
    The process is not well defined for $d=1, \alpha < -2$ 
\end{corollary}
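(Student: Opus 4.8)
The plan is to upgrade the positive-probability blow-up scenario contained in the proof of Theorem~\ref{thm:growthRate2} to an almost-sure statement via ergodicity, and then to read off that any construction of the dynamics explodes immediately. Here ``not well-defined'' is meant in the sense that there is no c\`adl\`ag process valued in $\{0,1\}^{\BZ}\times\{0,1\}^{\mathcal{E}(\BZ)}$ satisfying the defining properties (1)--(3); the statement to be proved is that, almost surely, an infinite cluster is present arbitrarily close to time $0$, so that the first appearance of an infinite cluster --- and hence any explosion time of a graphical construction --- equals $0$ a.s.

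First I would isolate from the proof of Theorem~\ref{thm:growthRate2} the following sharpening of its $\alpha<-2$ conclusion: for every $t_0>0$ there is an event $B_{t_0}$, measurable with respect to the initial configuration together with the driving Poisson clocks and random-walk increments, with $\Prob{B_{t_0}}>0$, on which $|\mathfs{C}_0(t_0)|=\infty$. Informally, one conditions on an atypically long run of open sites around $0$ (positive probability since $p<1$) and on the clocks of the growing cluster ringing fast enough to absorb, one after another, its $O(1)$-sized neighbouring clusters; since the rate $|\mathfs{C}|^{-\alpha}$ increases in $|\mathfs{C}|$ with $-\alpha>2$, the times needed for these successive absorptions are summable and their sum can be forced below $t_0$ once the initial run is long enough. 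In particular $\Prob{|\mathfs{C}_0(t_0)|=\infty}>0$ for every $t_0>0$, and prior to the first appearance of an infinite cluster the dynamics form an honest jump process, so this is a bona fide probabilistic statement.

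Next I would invoke ergodicity. The driving data --- the i.i.d.\ Bernoulli$(p)$ initial configuration and the i.i.d.\ family of clocks and random-walk increments attached to the clusters --- is invariant and ergodic under the integer shift, and the construction is shift-equivariant, so the event $G_{t_0}:=\{\text{an infinite cluster is present at time }t_0\}$ is shift-invariant; since $\Prob{G_{t_0}}\ge\Prob{|\mathfs{C}_0(t_0)|=\infty}>0$, ergodicity forces $\Prob{G_{t_0}}=1$ for every $t_0>0$. No mechanism ever deletes an edge inside a cluster or splits a cluster, so $G_{t_0}$ is increasing in $t_0$; intersecting over $t_0=1/n\downarrow 0$ shows that almost surely an infinite cluster is present at every positive time, i.e.\ the first appearance of an infinite cluster is $0$ a.s. Since the initial clusters are finite almost surely (they are the runs of open sites, and $p<1$), producing an infinite cluster by time $t_0$ requires infinitely many merge events involving a single cluster within $[0,t_0]$, which is an explosion; hence no process satisfying (1)--(3) exists, which is the corollary.

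The ergodic-theoretic and monotonicity steps are routine; the real work --- and the main obstacle --- is expository. One must phrase ``positive probability conditions for blow up in any finite time'' in a form genuinely delivered by the proof of Theorem~\ref{thm:growthRate2} (whose stated conclusion is only the $t\to\infty$ limit), and one must make precise the sense in which the process exists up to the first appearance of an infinite cluster so that the shift-invariance argument is not circular. The cleanest remedy for the latter is to run the entire argument with the genuinely well-defined (because finite-state) finite-volume Markov chains, and to conclude that their origin-cluster sizes at time $t_0$ fail to remain bounded as the volume tends to infinity.
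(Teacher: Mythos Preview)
Your proposal is essentially the same as the paper's: the paper does not write out a proof of this corollary but simply remarks that it ``is an easy consequence of the proof of Theorem~\ref{thm:growthRate2} using the ergodicity of the model, and the fact that the proof provides positive probability conditions for blow up in any finite time,'' which is precisely the two-step scheme (positive-probability blow-up at any $t_0>0$, then ergodicity) you outline. Your discussion of the circularity issue and the finite-volume remedy goes beyond what the paper says explicitly, but the underlying strategy is identical.
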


We can see that this result for $-2< \alpha < 0$ gives a weaker result than the equivalent theorem for $\alpha > 0$, since we cannot take $c = c_n \to 0$ as $n \to \infty$. However, $\gamma_n \to \frac{1}{\alpha + 2}$ as $n \to \infty$, so we can get arbitrarily close to the correct exponent (given also by the upper bound).

\begin{open_problem}
    Find the behavior for $d=1$ and the critical parameter $\alpha = -2$.
\end{open_problem}

In higher dimensions we conjecture that $\alpha=-1$ is the critical value for blowup. 
\begin{open_problem}
Show that for all $d>1$, and any $\alpha>-1$, all clusters are finite a.s.
\end{open_problem}
\begin{open_problem}
Show that for all $d>1$, and any $\alpha<-1$, there is blowup at finite time a.s.
\end{open_problem}

These problems, with partial sulotions, are discussed in our work in progress \cite{wop}

Note that even for simpler aggregation processes with a single aggregate, not much is currently known about growth rates. For DLA, the best result is still Kesten's \cite{kesten1987long}, where it is proved that DLA with $n$ particles is contained in a ball of radius $n^{2/3}$. Similar results were recently established for the dialectic breakdown model \cite{losev2025long}. The only DLA type model in which we have exact growth bounds is an off-lattice version constructed by a composition of conformal maps called the stationary Hastings-Levitov(0) \cite{berger2022growth,procaccia2021dimension}.  For Multi-particle DLA (MDLA), a model which has a single aggregate but driven by a field of random walkers, much as our model, in high dimension linear growth rate is known in the high intensity regime. For 1d MDLA, exact growth bounds are known, with interesting critical behaviour \cite{elboim2020critical,sidoravicius2017one,sly2020one}. Mean field versions of our model were considered by employing equations resembling the Smoluchowski equations \cite{smoluchowski1918versuch}, as part of the study of Oswald ripening \cite{ostwald1903lehrbuch,voorhees1985theory}. A slightly different approach was taken in \cite{rath2009erdHos}, where the authors consider a {\em mean field forest fire model} which follows the Smoluchowski equations, but where the emerging infinite component instantly breaks down into single particles.

\section{Definitions and notations for $d=1$}
The following discussion considers the case $d=1$, $p\in(0,1)$.
Let $X_i(t)$ denote the position of the $i$'th particle to the right of $0$'th particle, with the $0$'th particle being the particle closest to $0$. Denote by $\mathfs{C}_i(t)$ the connected component of $X_i(t)$ at time $t$ (this does not define clusters uniquely by index). Denote by $|\mathfs{C}_i(t)|$ its size. Denote by $x \overset{t}{\leftrightarrow} y$ the event that $\mathfs{C}_x(t) = \mathfs{C}_y(t)$. 

Note that $X_i(t)$ and $\mathfs{C}_i(t)$ may take non integer $i$ values, in which case we consider $X_{\lceil i \rceil}(t)$ and $ \mathfs{C}_{\lceil i \rceil}(t) $

Define $\Tilde{\tau}(x,y) = \inf\cb{t \ge 0 : x \overset{t}{\leftrightarrow} y}$, and let $S_x^y$ be the number of discrete steps taken by $\mathfs{C}_x$ until time $\Tilde{\tau}(x,y)$. Let $t_i^x$ be the time taken for $\mathfs{C}_x$ to take the $i$'th step. Denote $T_{i,n}^x = \sum_{j=i+1}^{n}{t_j^x}$, $T_n^x = T_{0,n}^x$ and $\tau_x^y = \sum_{i=1}^{S_x^y}{t_i^x} = T_{S_x^y}^x$. Note that $\tau_x^y\neq \tau_y^x$ as they differ in the time of the last step taken to connect the clusters. 

Define $\mathfs{C}_x(t-) := \bigcap_{\ep>0}\bigcup_{t-\ep<s<t}{\mathfs{C}_x(s)}$ to be the cluster right before any change made at exactly time $t$.

From the definition of the step times $t_i^x$, we can see that they follow the law of a the inter arrival times of a Poisson process with non uniform (random) rate $\lambda(t) = |\mathfs{C}_x(t)|^{-\alpha}$.  We can thus define times $\Tilde{t}_i^x$ and $\Tilde{T}_n^x = \sum_{i=1}^{n}{\Tilde{t}_i^x}$
\begin{equation}
    u_0^x(t) = \int_{0}^{t}{\abs{\mathfs{C}_x(s)}^{-\alpha} ds} =\int_{0}^{t}{\lambda (s) ds} \qquad \Tilde{T_n} = u_0^x\rb{T_n}
\end{equation} 
and have that $\Tilde{t_i^x}$ distributed as the inter arrival times of a rate 1 Poisson Process (i.i.d $\Tilde{t_i^x} \sim exp(1)$). This can be seen in
\cite[P.22-23]{DaleyJones2003PointProcesses}, where $N(t) = \cb{\text{\# moves of $\mathfs{C}_x$ up to time t}}$ and we define $\Tilde{N}(t) = N(u^{-1}(t))$. We get that $\Tilde{N}$ is a rate 1 Poisson process with arrival times $\Tilde{T_n}$, Since $\Tilde{N}\rb{\Tilde{T_n}} = N\rb{u^{-1}\rb{u\rb{T_n}}} = N\rb{T_n}$ gives the arrival times of $N$. 
A slightly different and in some cases more useful way to write this, is to consider starting the process right after the $(k-1)$'th step of $\mathfs{C}_x$, which corresponds to time $\sum_{i=1}^{k-1}{t_i^x}$: 
\begin{equation}
    u_\tau^x(t) = \int_{\tau}^{t+\tau}{\abs{\mathfs{C}_x(s)}^{-\alpha} ds} =\int_{\tau}^{t+\tau}{\lambda (s) ds} \qquad \Tilde{T}_{k,n}^x = u_{T_k^x}^x\rb{T_{k,n}^x}
\end{equation}

This characterization gives us the following property. Given $k,n,v\in \BN$ s.t $k<n, x\in \mathds{Z}, \tau \in \mathds{R}_+$ we have the following. 
\begin{lemma}\label{lem:sizeBoundTime}
For $\alpha < 0$:
\begin{equation}\label{slow<0}
    \Prob{T_{k,n}^x \le \tau, \abs{\mathfs{C}_x\rb{T_n^x-}} \le v} \le \Prob{v^\alpha \Tilde{T}_{k,n}^x \le \tau}
\end{equation}
and
\begin{equation}\label{fast<0}
    \Prob{T_{k,n}^x \ge \tau, \abs{\mathfs{C}_x\rb{T_k^x}} \ge v} \le \Prob{v^\alpha \Tilde{T}_{k,n}^x \ge \tau}
.\end{equation}

For $\alpha > 0$:
\begin{equation}\label{slow>0}
    \Prob{T_{k,n}^x \le \tau, \abs{\mathfs{C}_x\rb{T_k^x}} \ge v} \le \Prob{v^\alpha \Tilde{T}_{k,n}^x \le \tau}
\end{equation}
and
\begin{equation}\label{fast>0}
    \Prob{T_{k,n}^x \ge \tau, \abs{\mathfs{C}_x\rb{T_n^x-}} \le v} \le \Prob{v^\alpha \Tilde{T}_{k,n}^x \ge \tau}
.\end{equation}
\end{lemma}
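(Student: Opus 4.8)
The plan is to prove all four inequalities by a single pathwise comparison, using two monotonicity facts together with the integral representation $\tilde T_{k,n}^x = u_{T_k^x}^x(T_{k,n}^x) = \int_{T_k^x}^{T_n^x} |\mathfs{C}_x(s)|^{-\alpha}\,ds$ recorded above (note $T_k^x + T_{k,n}^x = T_n^x$, so this is the integral of the rate over the interval between the $k$-th and $n$-th steps of $\mathfs{C}_x$). The first fact is that in the one-dimensional cluster-cluster model clusters only ever merge and never split, so along every sample path $s \mapsto |\mathfs{C}_x(s)|$ is non-decreasing and right-continuous, with jumps only at step times; in particular, writing $I := [T_k^x, T_n^x)$, one has $|\mathfs{C}_x(T_k^x)| \le |\mathfs{C}_x(s)| \le |\mathfs{C}_x(T_n^x-)|$ for every $s \in I$, and $|I| = T_{k,n}^x$. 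The second fact is that $y \mapsto y^{-\alpha}$ is increasing when $\alpha < 0$ and decreasing when $\alpha > 0$.

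First I would treat \eqref{slow<0}. On the event $\{|\mathfs{C}_x(T_n^x-)| \le v\}$ with $\alpha < 0$, monotonicity gives $|\mathfs{C}_x(s)|^{-\alpha} \le v^{-\alpha}$ for all $s \in I$, hence
\[
\tilde T_{k,n}^x = \int_I |\mathfs{C}_x(s)|^{-\alpha}\,ds \le v^{-\alpha}\,|I| = v^{-\alpha}\,T_{k,n}^x ,
\]
i.e. $v^\alpha \tilde T_{k,n}^x \le T_{k,n}^x$; intersecting with $\{T_{k,n}^x \le \tau\}$ shows the left-hand event is contained in $\{v^\alpha \tilde T_{k,n}^x \le \tau\}$, and taking probabilities gives \eqref{slow<0}. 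Inequality \eqref{fast<0} is the mirror image: on $\{|\mathfs{C}_x(T_k^x)| \ge v\}$ with $\alpha<0$, monotonicity gives $|\mathfs{C}_x(s)|^{-\alpha} \ge v^{-\alpha}$ on $I$, so $v^\alpha \tilde T_{k,n}^x \ge T_{k,n}^x \ge \tau$ on the event in question. The two cases $\alpha > 0$ are proved the same way, with the direction of the second monotonicity fact reversed: now a \emph{lower} bound $|\mathfs{C}_x(s)| \ge v$ produces an \emph{upper} bound $|\mathfs{C}_x(s)|^{-\alpha} \le v^{-\alpha}$, which is precisely why \eqref{slow>0} pairs $\{T_{k,n}^x \le \tau\}$ with $\{|\mathfs{C}_x(T_k^x)| \ge v\}$, and symmetrically \eqref{fast>0} pairs $\{T_{k,n}^x \ge \tau\}$ with $\{|\mathfs{C}_x(T_n^x-)| \le v\}$; in both cases the same chain $\tilde T_{k,n}^x \lessgtr v^{-\alpha} T_{k,n}^x$ followed by multiplication by $v^\alpha$ closes the argument.

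I do not expect a substantial obstacle here: once the time-change integral representation is in hand, the lemma is bookkeeping. The only points demanding care are keeping the comparison interval half-open, so that the correct dominating size is the left limit $|\mathfs{C}_x(T_n^x-)|$ rather than the post-jump value at $T_n^x$; verifying that the four pairings of slow/fast with $T_k^x$/$T_n^x-$ in the statement are exactly those for which monotonicity of the size and monotonicity of $y \mapsto y^{-\alpha}$ push the bound in the direction needed; and observing that the whole argument is pathwise, so no independence or distributional information about the $\tilde t_i^x$ is used at this stage (that is needed only when the lemma is applied). In writing it up I would give \eqref{slow<0} in full detail and then indicate the sign and endpoint changes for the remaining three.
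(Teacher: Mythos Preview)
Your proposal is correct and follows essentially the same approach as the paper: both argue pathwise via monotonicity of $s\mapsto|\mathfs{C}_x(s)|$ combined with the integral representation $\tilde T_{k,n}^x=\int_{T_k^x}^{T_n^x}|\mathfs{C}_x(s)|^{-\alpha}\,ds$ to obtain $\tilde T_{k,n}^x\le v^{-\alpha}T_{k,n}^x$ (resp.\ $\ge$) on the relevant event, then deduce the event inclusion. Your write-up is in fact a bit more explicit about the half-open interval and the four pairings, but there is no substantive difference.
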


\begin{proof} We will show \eqref{slow<0}, as the other inequalities are done in the same way, with some inequalities reversed. 
Given $\cb{\abs{\mathcal{C}_x\rb{T_n^x-}} \le v}$, by monotonicity of $\abs{\mathcal{C}_x(\cdot)}$, we have that $\abs{\mathcal{C}_x(t)} \le v$ for all $0 \le t < T_n^x$. This gives us, since $\alpha < 0$, that $u_{T_k}^x(t) \le v^{-\alpha} t$ for all $0 \le t \le T_n^x - T_k^x = T_{k,n}^x$. Setting $t = T_{k,n}^x$ we obtain ${T}_{k,n}^x\ge v^\alpha \Tilde{T}_{k,n}^x$. 

\end{proof}

\section{Exact cluster size distribution for $\alpha = 0$}
The following is true for $d=1,\alpha = 0, 0<p<1$. 
\begin{theorem}\label{alpha=0}
    $\forall c>0$:
    \begin{equation}
        \begin{aligned}
            \lim_{t \to \infty}\Prob{\frac{|\mathfs{C}_0(t)|}{\sqrt{t}} \le x}=2\Phi\rb{\frac{x(\eta-1)}{2}} - \frac{x(\eta-1)}{\sqrt{2\pi}}e^{-\frac{\big(\frac{x}{2}(\eta-1)\big)^2}{2}} - 1
        \end{aligned}
    \end{equation}
    where $\Phi$ is the cdf of a $N(0,1)$.
\end{theorem}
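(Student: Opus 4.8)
The plan is to reduce the cluster-size question to a problem about a one-dimensional random walk among Poisson points, and then invoke a local central limit theorem. First, since $\alpha=0$, every cluster moves as a rate-$1$ continuous-time simple random walk, independently of its size, until it collides with a neighbour; at collision the two clusters merge and continue to move as a single rate-$1$ walk. Observe that the collection of initially occupied sites is a Bernoulli($p$) field on $\ZZ$, so the gaps between consecutive occupied sites are i.i.d.\ Geometric, with mean $\eta=p^{-1}$. The key combinatorial fact I would establish is that the \emph{identity} of the set of particles that have merged into $\mathfs{C}_0(t)$ is determined purely by the relative displacements of the independent driving walks: cluster $\mathfs{C}_0$ contains exactly those initial particles $i$ in a maximal interval $[-L(t), R(t)]$ such that every intermediate gap has been ``closed'' by time $t$. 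Because all clusters share the same speed, one can use a standard coupling/graphical-representation argument (each unit gap between consecutive original particles is bridged the first time the difference of the two relevant driving Brownian-scaled walks hits that gap width) to decouple the merging dynamics from the sizes.

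Next I would pass to the diffusive scaling. Run time to $t$ and rescale space by $\sqrt t$; the driving walks converge to independent Brownian motions, and the occupied sites, rescaled, converge to a Poisson process on $\RR$ of intensity $\lambda = p/\text{(lattice spacing)}$ — more precisely, after the $\sqrt t$ rescaling the original particles sit at points whose spacings are $\approx \eta/\sqrt t$, i.e.\ they form (asymptotically) a Poisson process of intensity $\sqrt t/\eta$ on the macroscopic scale, equivalently $|\mathfs{C}_0(t)|/\sqrt t$ converges to (intensity $1/\eta$) $\times$ (length of the merged interval around $0$). So it remains to compute the law of the macroscopic interval: this is the set of Poisson points $x$ such that the Brownian motion attached to $x$ and the one attached to its neighbour have come within the (vanishing) gap of each other — in the limit this becomes the event that consecutive Brownian motions have \emph{crossed}, i.e.\ their order has been exchanged, or equivalently that a single Brownian motion (the difference) has hit $0$. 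Thus $R(t)/\sqrt t$ converges to the distance from $0$ to the first Poisson point $x>0$ whose attached difference-walk has \emph{not} hit $0$ by time $1$, and similarly on the left; these two are independent given the Poisson configuration.

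Now the explicit computation. Conditionally on the Poisson points, the events ``gap $j$ closed by time $1$'' are independent, and for a gap of (microscopic) width $g_j$ the difference of two independent rate-$1$ walks is a rate-$2$ walk, which on the diffusive scale is a Brownian motion of variance $2$; the probability it has \emph{not} hit $0$ scales like $\frac{c\,g_j}{\sqrt{t}}$ by the reflection principle (a hit is essentially certain for microscopic $g_j$, with the small surviving probability proportional to the gap). Summing the surviving-probability ``rate'' against the Poisson intensity, the right endpoint $R(t)/\sqrt t$ has a limiting law which is that of the first point of an inhomogeneous thinning — concretely I expect $\Prob{R(t)/\sqrt t > x} \to \E{e^{-\int_0^x \rho(y)\,dy}}$ for an appropriate (random, then averaged) hazard, and after carrying out the Gaussian integral this collapses to a clean closed form. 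The cleanest route is: let $G = |\mathfs{C}_0(t)|/\sqrt t$; show $G \Rightarrow G_- + G_+$ with $G_\pm$ i.i.d., each $G_\pm$ being (up to the factor $\eta^{-1}$ and a factor $2$ from the two-sided count) the hitting time / first-passage structure of a Brownian motion, so that $G$ has the size-biased-Gaussian density $\frac{\gamma x^2}{\sqrt{2\pi}}e^{-\gamma x^2/2}\mathbf 1_{x>0}$ with $\gamma=(\eta-1)^2/4$; integrating this density gives exactly the right-hand side of \eqref{eq:party10} via $\int_0^x \gamma y^2 (2\pi)^{-1/2} e^{-\gamma y^2/2}\,dy = 2\Phi(x\sqrt\gamma) - 1 - x\sqrt{2\gamma/\pi}\,e^{-\gamma x^2/2}$ and $\sqrt\gamma = (\eta-1)/2$.

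The main obstacle, and where the real work lies, is the first step: rigorously justifying that the merged cluster $\mathfs{C}_0(t)$ is, up to negligible error, exactly the interval of original particles all of whose intermediate gaps have closed, \emph{and} controlling the fact that after a merge the combined cluster moves together (so the driving walks are no longer literally independent after the first collision). I would handle this by a ``no interaction before merging'' coupling: before any collision the walks are independent, and the first collision time / location is a function of independent walks; once merged, one can re-randomise and the subsequent motion only affects \emph{further} merges, which by a monotonicity/domination argument can be sandwiched between the all-independent configuration and a slightly perturbed one, with the discrepancy vanishing under $\sqrt t$ scaling. Making this sandwiching quantitative — in particular showing that the ``mismatch'' at the two moving endpoints contributes $o(\sqrt t)$ particles — is the technical heart of the argument; everything downstream is a Poisson/Brownian computation plus a routine tightness argument.
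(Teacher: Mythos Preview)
Your proposal has a real gap precisely at what you flag as ``the main obstacle''. You propose to control the post-merger dependence by a sandwiching/monotonicity argument, and in parallel you compute via ``gap $j$ closed'' events treated as conditionally independent. Both threads are problematic: the sandwiching is vague and, more importantly, unnecessary; and the gap-closing events are \emph{not} independent --- adjacent gaps share a particle, so the corresponding difference walks are correlated, and the Poisson-thinning / first-surviving-point picture does not go through as stated.

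The paper's proof bypasses all of this with one clean observation you are missing: because $\alpha=0$, every cluster moves at rate $1$ regardless of its size, so as long as particles $0$ and $n$ lie in \emph{different} clusters, those two clusters move as independent rate-$1$ walks, and hence $X_n(t)-X_0(t)$ is \emph{exactly} a rate-$2$ simple random walk right up to the moment $0$ and $n$ merge --- irrespective of any intermediate mergers inside either cluster. Thus $\Prob{\con{0}{n}{t}}$ is exactly the probability that a rate-$2$ walk started from $\sum_{i=1}^n G_i$ (i.i.d.\ Geometric gaps) reaches level $n$ by time $t$; this is a single-walk reflection-principle/LCLT computation with no approximation or coupling needed. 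From there the paper proceeds combinatorially rather than probabilistically: translation invariance and inclusion--exclusion give $\Prob{|\mathfs{C}_0(t)|=n+1} = (n+1)(\Delta_n-\Delta_{n+1})$ with $\Delta_k = \Prob{\con{0}{k}{t}}-\Prob{\con{0}{k+1}{t}}$, and a telescoping sum yields the CDF as $1-(n+1)\Delta_{n+1}-\Prob{\con{0}{n+1}{t}}$, both pieces then evaluated at $n\sim x\sqrt t$. Your $G_-+G_+$ heuristic is also off: the left and right extents of $\mathfs{C}_0$ from the tagged particle are exchangeable but not independent (their joint law depends only on their sum), and the $x^2$ weight in the limiting density arises directly from the size-bias factor $(n+1)$ multiplied against the second difference of the hitting probability, not from a convolution of two i.i.d.\ pieces.
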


\begin{proof}
Let $c \in \mathbb{R}$ and denote $n=\lceil c\sqrt{t} \rceil-1$.
\begin{equation}
    \begin{aligned}
        &\Prob{\abs{\mathfs{C}_0(t)} = n+1}\\ 
        &= \sum_{i=0}^{n}{\Prob{i-n \overset{t}{\leftrightarrow} i, i-n-1\overset{t}{\nleftrightarrow} i-n, i\overset{t}{\nleftrightarrow} i+1}} \\
        &= (n+1)\rb{\Prob{\con{0}{n}{t}, \ncon{-1}{0}{t}, \ncon{n}{n+1}{t}}} \\
        &= (n+1)\rb{\Prob{\con{0}{n}{t}, \ncon{-1}{0}{t}} - \Prob{\con{0}{n}{t}, \ncon{-1}{0}{t}, \con{n}{n+1}{t}}} \\
        &= (n+1)\rb{\Prob{\con{0}{n}{t}, \ncon{-1}{0}{t}} - \Prob{\con{0}{n+1}{t}, \ncon{-1}{0}{t}}} \\
        &= (n+1) \left ( \rb{\Prob{\con{0}{n}{t}} - \Prob{\con{0}{n}{t}, \con{-1}{0}{t}}} \right.  \\
        & \qquad \qquad \quad -  \left. \rb{\Prob{\con{0}{n+1}{t}} - \Prob{\con{0}{n+1}{t}, \con{-1}{0}{t}}}\right) \\
        &= (n+1) \left ( \rb{\Prob{\con{0}{n}{t}} - \Prob{\con{0}{n+1}{t}}} \right.  
         - \left. \rb{\Prob{\con{0}{n+1}{t}} - \Prob{\con{0}{n+2}{t}}}\right) \\
        &= (n+1)\rb{\Delta_{n}-\Delta_{n+1}}
    \end{aligned}
\end{equation}
where we used disjoint partitions and translation invariance, and we denote $\Delta_n = \Prob{\con{0}{n}{t}} - \Prob{\con{0}{n+1}{t}}$.
Now we can calculate the CDF using this:
\begin{equation}\label{CDFcalc}
    \begin{aligned}
        \Prob{\abs{\mathfs{C}_0(t)}\le n+1} &= \sum_{k=0}^{n}{\Prob{\abs{\mathfs{C}_0(t)} = k+1}} = \sum_{k=0}^{n}\rb{(k+1)\rb{\Delta_{k}-\Delta_{k+1}}} \\
        &= \sum_{k=0}^{n}{(k+1)\Delta_k}-\sum_{k=0}^{n}{(k+1)\Delta_{k+1}} \\
        &= \sum_{k=0}^{n}{(k+1)\Delta_k}-\sum_{k=1}^{n+1}{k\Delta_{k}} \\
        &= -(n+1)\Delta_{n+1} + \Delta_0 + \sum_{k=1}^{n}\rb{(k+1-k)\Delta_k} \\
        &= -(n+1)\Delta_{n+1} + \Delta_0 + \sum_{k=1}^{n}\Delta_k \\
        &= -(n+1)\Delta_{n+1} + \sum_{k=0}^{n}\rb{\Prob{\con{0}{k}{t}} - \Prob{\con{0}{k+1}{t}}} \\
        &= -(n+1)\Delta_{n+1} + \Prob{\con{0}{0}{t}} - \Prob{\con{0}{n+1}{t}} \\
        &= 1-(n+1)\Delta_{n+1} - \Prob{\con{0}{n+1}{t}}
    \end{aligned}
\end{equation}
To calculate this, we want to estimate $\Prob{\con{0}{n}{t}}$.
Notice that until the clusters of 0 and n connect, $X_0$ and $X_n$ are independent random walks with rate 1. Therefore the difference between the two is a random walk with rate 2. 

$$\Prob{0 \overset{t}{\leftrightarrow} n+1} = \Prob{X_{n+1}(t)-X_0(t)=n} = \Prob{\min_{0\le j\le t}\{S_j\}\le n}$$

Where $S_t$ is a symmetric rate 2 random walk starting at $S_0 \sim X_{n+1}-X_{0} \sim \sum_{i=1}^{n}{G_i}$ where $G_i \sim Geo(p)$, with $\Prob[S]{\cdot}$ being its probability measure.  
Denote $\frac{1}{p} = \eta$, and then for $\epsilon = n^{-\frac{1}{4}}$:


\begin{equation}
    \begin{aligned}
        &\Prob[S]{\min_{0\le j\le t}\{S_j\}\le n} =\\
        &\Prob[S]{\min_{0\le j\le t}\{S_j\}\le n|S_0 - n \in [n(\eta-1-\epsilon),n(\eta-1+\epsilon)] }\cdot\\
        &\Prob[S]{S_0-n\in [n(\eta-1-\epsilon),n(\eta-1+\epsilon)]} + O(e^{-n^{\frac{1}{2}}})  = \\
        &\Prob[S]{\min_{0\le j\le t}\{S_j\}\le 0|S_0 \in [n(\eta-1-\epsilon),n(\eta-1+\epsilon)]}\cdot\\
        &\Prob[S]{S_0 \in [n(\eta-\epsilon),n(\eta+\epsilon)]} + O(e^{-n^\frac{1}{2}})
    \end{aligned}
\end{equation}

Since by hoeffding
\begin{equation}\label{hoeffdingBinom}
    \begin{aligned}
        \Prob{\sumn{G_i} > n\eta(1+\ep)} 
        &= \Prob{Bin(n(\eta+\ep),p) < n} \\
        &= \Prob{Bin(n(\eta+\ep),p)-n(1+p\ep) < np\ep} \\ 
        &\le e^{\frac{2n^2p^2\ep^2}{(1+\ep)n}} \le e^{-n\ep^2p^2} = O(e^{-n^{\frac{1}{2}}}) 
    \end{aligned}
\end{equation}
And similarly for the other direction.

From this, using LCLT (\cite{lawler2010random} Theorem 2.1.1) we get (denoting by $B(t)$ standard Brownian motion with law $\Prob[B]{\cdot}$):
\begin{equation}\label{LCLT}
    \begin{aligned}
        &\Prob[S]{\min_{0\le j\le t}\{S_j\}\le 0|S_0 \in [n(\eta-1-\epsilon),n(\eta-1+\epsilon)] } \\ 
        \le \ &\Prob[S]{\max_{0\le j\le 1}\cb{\frac{S_{tj}}{2\sqrt{t}}} \ge \frac{n(\eta-1+\epsilon)}{2\sqrt{t}}} \\
        = \ &2\Prob[S]{\frac{S_{tj}}{2\sqrt{t}} \ge \frac{n(\eta-1+\epsilon)}{2\sqrt{t}}} \\
        = \ &2\Prob[B]{B(s) \ge \frac{n(\eta-1+\epsilon)}{2\sqrt{t}}} + O(t^{-3/2}) \\
        = \ &2\int_{\frac{n(\eta-1)}{2\sqrt{t}}+\frac{n}{2\sqrt{t}}\epsilon}^{\infty}{\frac{1}{\sqrt{2\pi}} e^{-\frac{x^2}{2}}dx} + O(t^{-3/2})
    \end{aligned}
\end{equation}

Now we can use this approximation:

\begin{equation}
    \begin{aligned}
        &(n+1)\Delta_{n+1} = (n+1)\rb{\Prob{\con{0}{n+1}{t}} - \Prob{\con{0}{n+2}{t}}}\\
        \le \ 
        &\frac{
            2\int_{\frac{n(\eta-1)}{2\sqrt{t}} + \frac{n}{2\sqrt{t}}\epsilon}^\infty 
            \frac{1}{\sqrt{2\pi}} e^{-\frac{x^2}{2}} \, dx 
            -
            2\int_{\frac{(n+1)(\eta-1)}{2\sqrt{t}} + \frac{n+1}{2\sqrt{t}}\epsilon}^\infty 
            \frac{1}{\sqrt{2\pi}} e^{-\frac{x^2}{2}} \, dx
            + O(t^{-3/2})
        }{\frac{1}{n+1}}
        + \frac{O(e^{-n^{\frac{1}{2}}}) }{n} \\
        = \ 
        &\frac{
            2\frac{c(\eta-1)}{2}\int_{\frac{n(\eta-1)}{2\sqrt{t}} + \frac{n}{2\sqrt{t}}\epsilon}^{\frac{(n+1)(\eta-1)}{2\sqrt{t}} + \frac{n+1}{2\sqrt{t}}\epsilon}
            \frac{1}{\sqrt{2\pi}} e^{-\frac{x^2}{2}} \, dx
        }{\frac{c(\eta-1)}{2}\frac{1}{n+1}}
        + O(t^{-1}) + \frac{O(e^{-n^{\frac{1}{2}}})}{n}  \\
        \xrightarrow{t \to \infty}
        &2\frac{c(\eta-1)}{2\sqrt{2\pi}}e^{-\frac{\big(\frac{c}{2}(\eta-1)\big)^2}{2}}
    \end{aligned}
\end{equation}

To compute the limit, we can see that the difference between the upper and lower boundary is:
$$
\frac{(n+1)(\eta-1)}{2\sqrt{t}}+\frac{n+1}{2\sqrt{t}}\epsilon
-
\frac{n(\eta-1)}{2\sqrt{t}}+\frac{n}{2\sqrt{t}}\epsilon
=
\frac{\eta-1}{2\sqrt{t}}+\frac{\epsilon}{2\sqrt{t}} \sim \frac{c(\eta-1)}{2}\frac{1}{n+1}
$$

We get the same limit with $\ge$ in the same way, and then by sandwich:
$$
(n+1)\Delta_{n+1} \xrightarrow{t \to \infty} \frac{c(\eta-1)}{\sqrt{2\pi}}e^{-\frac{\big(\frac{c}{2}(\eta-1)\big)^2}{2}} 
.$$
Estimate \ref{LCLT} also gives:
\begin{equation}
    \begin{aligned}
        \Prob{\con{0}{n+1}{t}}  
        =  &2\int_{\frac{n(\eta-1)}{2\sqrt{t}}+\frac{n}{2\sqrt{t}}\epsilon}^{\infty}{\frac{1}{\sqrt{2\pi}} e^{-\frac{x^2}{2}}dx} + O(t^{-3/2}) \\
        \xrightarrow{t \to \infty} &\int_{\frac{c(\eta-1)}{2}}^{\infty}{\frac{2}{\sqrt{2\pi}} e^{-\frac{x^2}{2}}dx} \\
        = &\int_{\frac{c(\eta-1)}{2}}^{\infty}{\frac{2}{\sqrt{2\pi}} e^{-\frac{x^2}{2}}dx} \\
        = &2-2\Phi\rb{\frac{c(\eta-1)}{2}}
    \end{aligned}
\end{equation}


Combining the results from above we get:
\begin{equation}
    \begin{aligned}
        \Prob{\frac{\abs{C_0(t)}}{\sqrt{t}} \le x} \xrightarrow{t \to \infty} &1 - \frac{x(\eta-1)}{\sqrt{2\pi}}e^{-\frac{\big(\frac{x}{2}(\eta-1)\big)^2}{2}} -2 +2\Phi\rb{\frac{x(\eta-1)}{2}} \\
        &= 2\Phi\rb{\frac{x(\eta-1)}{2}} - \frac{x(\eta-1)}{\sqrt{2\pi}}e^{-\frac{\big(\frac{x}{2}(\eta-1)\big)^2}{2}} - 1
    \end{aligned}
\end{equation}


\end{proof}

\section{$\alpha \ge 0$}
\subsection{upper bound}
Consider the setting of $d=1, \alpha \ge 0, p \in (0,1)$.
\begin{theorem}
Let $d=1,\alpha\ge 0,p\in(0,1)$. $\exists T>0$ s.t for all $\ep>0$ there exists $c>0$ s.t for all $t>T$,
\begin{equation}
    \Prob{|\mathfs{C}_0(t)| \le ct^{\frac{1}{\alpha + 2}}} \ge 1-\ep
.\end{equation}
\end{theorem}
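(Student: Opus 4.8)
The plan is to show that the cluster $\mathfs{C}_0(t)$ cannot be too large by controlling how fast it can grow to a given size $v = ct^{1/(\alpha+2)}$. The key observation is that if $|\mathfs{C}_0(t)| \ge v$, then by monotonicity of cluster size, the cluster of $0$ has absorbed at least $v-1$ other clusters, and these absorptions happen one step at a time. Each step requires a Poisson clock ring at rate $|\mathfs{C}|^{-\alpha} \le 1$ (since $\alpha \ge 0$ and $|\mathfs{C}| \ge 1$), but more importantly, the displacement needed to reach the $k$-th new particle to the right (or left) is of order $k$ in space, hence $X_0$ must make $\Omega(k^2)$ net steps — no wait, the relevant point is rather that to absorb $v$ particles the driving walk $X_0$ must reach spatial distance $\Theta(v)$ from its start, so it must take at least on the order of $v$ steps; combined with the slow clock once the cluster is large, this forces $t$ to be at least of order $v^{\alpha+2}$.

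First I would fix $v = v(t) = \lceil ct^{1/(\alpha+2)}\rceil$ and write, via translation invariance as in the $\alpha=0$ computation, $\Prob{|\mathfs{C}_0(t)| \ge v+1} \le \sum_{i} \Prob{\con{i-v}{i}{t}}$ restricted appropriately, or more simply bound $\Prob{|\mathfs{C}_0(t)| \ge v+1} \le \Prob{\con{0}{v}{t}} + \Prob{\con{-v}{0}{t}}$ up to the indexing subtleties; in fact it suffices to bound $\Prob{\con{0}{v}{t}}$ since by symmetry the left side is identical. Now $\Tilde\tau(0,v) \le t$ means the cluster containing $0$ has taken $S_0^v$ steps by time $t$, and during this process $X_0$ travels a net displacement of at least order $v$ (the $v$-th particle to the right starts at distance $X_v(0) - X_0(0) = \sum_{i=1}^v G_i \ge v$). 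So $S_0^v \ge v$ deterministically. The heart of the argument is then to lower-bound $T_{S_0^v}^0 = \tau_0^v$, the physical time for these steps: once the cluster has size $\ge k$, each subsequent step takes time stochastically at least $\mathrm{Exp}(1) \cdot k^{-\alpha}$... I would instead split the absorption process into the phases where $|\mathfs{C}_0| \in [2^{j}, 2^{j+1})$, each of which requires $\ge 2^j$ steps, each taking expected time $\ge 2^{-(j+1)\alpha}$, giving total expected time $\gtrsim \sum_{j \le \log_2 v} 2^{j(1-\alpha)}$ when $\alpha < 1$, which is $\gtrsim v^{1-\alpha}$ — but that is not quite the right exponent, so the correct bookkeeping is: $\tau_0^v \ge \sum_{k=1}^{v} t^{(k)}$ where $t^{(k)}$ is the time for the step absorbing the $k$-th particle, and $t^{(k)} \succeq k^{-\alpha}\mathrm{Exp}(1)$ in the stochastic order (using that $|\mathfs{C}_0|$ is at most roughly $k$ at that point — this needs care, as the cluster can be larger on the other side). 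Applying Lemma \ref{lem:sizeBoundTime}, specifically the inequality \eqref{fast>0} with the conditioning event reformulated on the time-changed clocks, and summing, yields $\Prob{\tau_0^v \le t} \le \Prob{\sum_{k=1}^{v} k^{-\alpha}\Tilde t_k \le t}$ for i.i.d.\ $\Tilde t_k \sim \mathrm{Exp}(1)$; since $\E{\sum_{k=1}^v k^{-\alpha}\Tilde t_k} \asymp v^{1-\alpha}/(1-\alpha) \asymp v^{1-\alpha}$ hmm — this still gives exponent $1-\alpha$ in $v$, i.e.\ size $\asymp t^{1/(1-\alpha)}$, contradicting the claimed $t^{1/(\alpha+2)}$.

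The resolution — and the step I expect to be the real obstacle — is that I have undercounted the number of steps: to absorb $v$ particles spread over a spatial region of width $\Theta(v)$, the driving walk $X_0$ does not take $\Theta(v)$ steps but rather, because it is diffusive and must cover displacement of order the current radius while also the absorbed mass makes it harder, the number of steps to go from radius $r$ to radius $2r$ is of order $r^2$, so absorbing the $k$-th particle takes $\asymp k$ additional steps, for a total of $S_0^v \asymp v^2$ steps, each of duration $\asymp |\mathfs{C}_0|^{-\alpha}$ which during the last doubling is $\asymp v^{-\alpha}$; the dominant contribution to $\tau_0^v$ comes from the last doubling and is $\asymp v^2 \cdot v^{-\alpha} = v^{2-\alpha}$... still not $v^{\alpha+2}$. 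I think the correct accounting, which I would work out carefully, is: when the cluster has radius $\asymp k$ (size $\asymp k$) it moves at rate $k^{-\alpha}$, and to increase its radius by $1$ — i.e.\ to reach and absorb the next particle — the driving walk needs $\asymp k$ steps on average in time, hence physical time $\asymp k \cdot k^{\alpha} = k^{\alpha+1}$ per unit of radius growth wait: rate $k^{-\alpha}$ means each step takes time $k^\alpha$, and we need $\asymp k$ steps, so physical time $\asymp k^{\alpha+1}$ to grow the radius from $k$ to $k+1$; summing $\sum_{k=1}^v k^{\alpha+1} \asymp v^{\alpha+2}$. So setting this $\ge ct^{...}$ wait it equals $t$, giving $v \asymp t^{1/(\alpha+2)}$, as claimed. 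Thus the concrete plan is: (i) show $S_0^v$, the number of steps to absorb $v$ particles on one side, satisfies $S_0^v \ge \sum_{k=1}^v \xi_k$ where $\xi_k$ is the number of steps for a rate-$1$ walk started at distance $\ge 1$ from a particle to hit it after the cluster already has size $k$, and $\E{\xi_k} \gtrsim $ (the gap), which after accounting for the $G_i$ spacings is $\Omega(k)$ in a suitable averaged sense; (ii) use Lemma \ref{lem:sizeBoundTime} to convert the physical-time constraint $T_{S_0^v}^0 \le t$ into $\sum_{k} |\mathfs{C}_0(\cdot)|^{-\alpha}$-weighted exponential sums, bounding $|\mathfs{C}_0|$ below by $k$ during the $k$-th phase; (iii) conclude $\E{\tau_0^v} \gtrsim v^{\alpha+2}$ and apply Markov (or a concentration bound, since the summands are independent given the combinatorial structure) to get $\Prob{\con{0}{v}{t}} = \Prob{\tau_0^v \le t} \le \ep/2$ for $c$ small and $t$ large. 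The genuinely delicate point is item (i): the walk $X_0$ is not a free random walk but carries the growing rigid cluster, and steps "wasted" moving away from one side are not wasted for the other side, so one must either run the argument on a single side with an absorbing/reflecting decoupling, or carefully track that the total step count is the max (not the sum) over the two sides — I would handle this by noting that absorbing $v$ particles total forces at least $v/2$ on one side, and on that side the net displacement lower bound combined with the diffusive upper bound on displacement-per-step count gives the required $\Omega(k)$-steps-per-particle estimate via a standard first-passage (gambler's ruin / reflection) computation for the lazy rate-$1$ walk.
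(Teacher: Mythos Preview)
Your final heuristic is right: to connect $0$ to particle $v$, the difference walk must traverse distance $\sim v$, requiring $\sim v^2$ steps; once the cluster has size of order $v$ each step costs physical time $\sim v^\alpha$, giving $\tau_0^v \sim v^{\alpha+2}$. (Several of your intermediate attempts confuse the rate $|\mathfs{C}|^{-\alpha}$ with the expected waiting time $|\mathfs{C}|^\alpha$, which is why you kept getting $v^{2-\alpha}$ before correcting.)

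The concrete plan, however, has a gap at step (i). The per-particle step counts $\xi_k$ are \emph{not} individually $\Omega(k)$: after absorbing particle $k$, the gap to particle $k+1$ is a fresh $G_{k+1}\sim\mathrm{Geom}(p)$, independent of $k$, and the next absorption can occur in $O(1)$ steps. Only the \emph{total} $S_0^v=\sum_k\xi_k\gtrsim v^2$ holds with high probability (via the displacement/reflection bound you mention at the end). But to invoke Lemma~\ref{lem:sizeBoundTime} you need a single lower bound on $|\mathfs{C}_0|$ valid across a block of $\Theta(v^2)$ consecutive steps, and your phase decomposition never establishes \emph{by which step} the cluster first reaches size $k$ --- a priori the first $v^2-v$ steps could all occur while the cluster is still tiny.

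The paper closes this gap with one threshold rather than many. Writing the target size as $Ct$ and the time as $t^{\alpha+2}$, it introduces
\[
A_1=\{S_0^t+S_t^0\le c_1t^2-1\},\qquad A_3=\{S_0^{Ct}+S_{Ct}^0\ge 2c_2(Ct)^2\},
\]
both of which hold with probability $\ge 1-\ep$ by the same random-walk estimate \eqref{LCLT} that drives the $\alpha=0$ result. On $A_1$, cluster $0$ has already absorbed particle $t$ by its own step $c_1t^2$, so $|\mathfs{C}_0(T^0_{c_1t^2})|\ge t$. On $A_3$ (plus symmetry), cluster $0$ makes at least $c_2(Ct)^2$ steps before connecting to $Ct$. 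Hence every step in the range $c_1t^2+1,\ldots,c_2(Ct)^2$ occurs at rate $\le t^{-\alpha}$, and \eqref{slow>0} gives
\[
\Prob{T^0_{c_1t^2,\,c_2(Ct)^2}\le t^{\alpha+2},\ |\mathfs{C}_0(T^0_{c_1t^2})|\ge t}\ \le\ \Prob{t^\alpha\sum_{i=c_1t^2+1}^{c_2(Ct)^2}\Tilde t_i^0\le t^{\alpha+2}},
\]
which is small by the law of large numbers once $C$ is large. The event $A_1$ is precisely the missing ingredient your plan lacks: a direct high-probability guarantee that the cluster is already of size $\ge t$ after the first $c_1t^2$ steps, so that the slowdown applies uniformly over the remaining $\Theta(C^2t^2)$ steps.

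Finally, your ``delicate point'' about two-sided growth is not an obstacle in this direction: absorption on the left only increases $|\mathfs{C}_0|$, hence slows the clock further, which is the inequality you want for an upper bound on size.
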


The idea of the proof is to consider only slowing down the cluster after time intervals of $t^2$ and $t^{2+\alpha}$ time, as this gives us an upper bound on the size. By random walk properties, we get that $|\mathfs{C}_0(t^2)|\approx t$. After applying the slowdown to the clusters, we can see that it takes roughly $t^{2+\alpha}$ to perform the same amount of steps as in the initial $t^2$ time. This shows us that we still have $|\mathfs{C}_0(t^{2+\alpha})| \approx t$. We do this showing that after clusters $\mathfs{C}_0$ $\mathfs{C}_t$ connect, and $\mathfs{C}_{Ct-t}$ $\mathfs{C}_{Ct}$ connect, the two clusters $\mathfs{C}_0$ and $\mathfs{C}_{Ct}$ still have $\approx t^2$ steps to perform until they connect. See figure \ref{fig:alpha positive upper bound} for the relevant clusters.

\begin{figure}
    \begin{center}
        \begin{tikzpicture}
            \draw[thick] (-1,0) -- (6,0);
            
            \fill (0,0) circle (2pt) node[below] {$0$};
            \fill (1.5,0) circle (2pt) node[below] {$t$};
            \fill (4,0) circle (2pt) node[below] {$Ct-t$};
            \fill (5.5,0) circle (2pt) node[below] {$Ct$};
        \end{tikzpicture}
    \end{center}
    \caption{The clusters involved in the proof of the upper bound for $\alpha \ge 0$}
    \label{fig:alpha positive upper bound}
\end{figure}

\begin{proof}
Let $\ep > 0$. Using the approximations seen in the previous result \eqref{LCLT}, we can see that $\exists c_1,c_2,T>0$ s.t $\forall C>0,t>T$:
\begin{equation*}
        A_1 = \cb{S_0^t + S_t^0 \le c_1t^2-1}, A_2 = \cb{S_{Ct-t}^{Ct} + S_{Ct}^{Ct-t} \le c_1t^2-1}, A_3 = \cb{S_{0}^{Ct} +S_{Ct}^0 \ge 2c_2(Ct)^2}\\
\end{equation*}
for i=1,2,3:
\begin{equation*}
    \Prob{A_i} \ge 1-\ep
.\end{equation*}
This is because it holds that:
\begin{equation}\label{boundEventsA}
    \begin{aligned}
        \Prob{A_1} = \Prob{A_2} \ge 1-c_ge^{-c_gc_1t}-c_ge^{-c_gc_1} \ge 1-c_ge^{-c_gc_1} \ge 1-\ep\\
        \Prob{A_3} \ge 1-c_ge^{-c_gc_2Ct}-c_ge^{-c_g\frac{1}{c_2}} \ge 1-c_ge^{\frac{1}{c_2}} \ge 1-\ep
    \end{aligned}
\end{equation}
Using Hoeffding calculation \eqref{hoeffdingBinom} with $\ep = \frac{\eta-1}{2}$ and \cite[Proposition 2.4.5.]{lawler2010random}, where $c_g$ is some global constant (not always the same constant). For the second inequality to hold, we take $C>\frac{1}{(c_2)^2}$ and $T>1$.

Define the events:
\begin{equation*}
    A_4 = \cb{S_0^{Ct} \ge c_2(Ct)^2}, A_5 = \cb{S_{Ct}^0 \ge c_2(Ct)^2}, T_1 = \cb{\tau_0^{Ct}\le t^\gamma}, T_2 = \cb{\tau_{Ct}^0 \le t^\gamma}
.\end{equation*}
And notice that $A_3 \subseteq A_4 \cup A_5$, since one of the two clusters must have walked at least half of the steps. Now we can see that for $t>T$:

\begin{equation}\label{eq:connection_prob}
    \begin{aligned}
        \Prob{0 \overset{t^\gamma}{\leftrightarrow} Ct} 
        &= \Prob{\tau_0^{Ct} \le t^\gamma, \tau_{Ct}^0 \le t^\gamma} \\
        &= \Prob{T_1 \cap T_2} \\
        &\le \Prob{T_1 \cap T_2 \cap (A_4 \cup A_5)} + \ep \\
        &= \Prob{(T_1 \cap T_2 \cap A_4) \cup (T_1 \cap T_2 \cap A_5)} + \ep \\
        &\le \Prob{T_1 \cap T_2 \cap A_4} + \Prob{T_1 \cap T_2 \cap A_5} + \ep \\
        &= 2\Prob{T_1 \cap T_2 \cap A_4} + \ep \\
        &\le 2\Prob{T_1 \cap A_4} + \ep \\
        &\le 2\Prob{T_1 \cap A_4 \cap A_1} + 3\ep \\
    \end{aligned}
\end{equation}

Using the law of total probability. Next we upper bound the last probability in \eqref{eq:connection_prob}.

\begin{equation}
    \begin{aligned}
        \Prob{T_1 \cap A_4 \cap A_1}
        &= \Prob{\sum_{i=1}^{S_0^{Ct}} t_i^0 \le t^\gamma, S_0^{Ct} \ge c_2(Ct)^2, S_0^{t} + S_t^{0} \le c_1t^2-1} \\
        &\le \Prob{\sum_{i=c_1t^2+1}^{c_2(Ct)^2} t_i^0 \le t^\gamma, S_0^{Ct} \ge c_2(Ct)^2, S_0^{t} + S_t^{0} \le c_1t^2-1} \\
        &\le \Prob{\sum_{i=c_1t^2+1}^{c_2(Ct)^2} t_i^0 \le t^\gamma, S_0^{t} + S_t^{0} \le c_1t^2-1} \\
        &\le \Prob{T_{c_1t^2,c_2(Ct)^2}^0 \le t^\gamma, \abs{C_0\rb{T_{c_1t^2}^0}} \ge t} \\
    \end{aligned}
,\end{equation}
where the last inequality is due to the fact that regardless which cluster has made the last move before connecting, connection will occur by time $T_{c_1t^2}^0=\sum_{i=1}^{c_1t^2}{t_i^0}$ (see Lemma \ref{lem:sizeBoundTime}).

Now we use \eqref{slow>0} to get that:

\begin{equation}
    \begin{aligned}
        &\le \Prob{t^\alpha \Tilde{T}_{c_1t^2,c_2(Ct)^2}^0\le t^\gamma} \\
        &=   \Prob{\sum_{i=c_1t^2+1}^{c_2(Ct)^2} \Tilde{t}_i^0 \le t^{\gamma-\alpha}} \\
        &\le \ep
    \end{aligned}
,\end{equation}

since this is a sum of $n = c_2(Ct)^2-c_1t^2$ i.i.d R.Vs, we can choose $C$ large enough so that $n \ge 2t^2$. Then by WLLN we have that $\frac{t^{\gamma-\alpha}}{n} \le \frac{t^2}{2t^2} = \frac{1}{2}$ and thus we get convergence to 0. We can also see that $n \xrightarrow[C \to \infty]{} \infty$, so we can take $C$ large enough for the last inequality to hold.


\end{proof}

\subsection{lower bound}
\begin{theorem}
    \begin{equation}
        \lim_{c \to 0}\lim_{t \to \infty}{\Prob{|\mathfs{C}_0(t)|\ge ct^{\frac{1}{\alpha + 2}}}} = 1
    \end{equation}
\end{theorem}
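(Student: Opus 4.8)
The plan is to mirror the structure of the upper bound proof but in reverse: I want to show that with probability bounded below (uniformly in $t$ after rescaling), the cluster $\mathfs{C}_0$ at time $t$ contains at least $\approx c\,t^{1/(\alpha+2)}$ particles, and then send $c\to 0$. Write $n = \lceil c\,t^{1/(\alpha+2)}\rceil$. The event $\{|\mathfs{C}_0(t)| \ge n\}$ contains the event that the particle indexed $0$ and the particle indexed $n-1$ (or rather $-\lfloor n/2\rfloor$ and $\lceil n/2\rceil$, using translation invariance to center things) have connected by time $t$, i.e. $\{\,0 \overset{t}{\leftrightarrow} n-1\,\}$, which is $\{\tau_0^{n-1} \le t\} \cap \{\tau_{n-1}^0 \le t\}$. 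So it suffices to lower bound $\Prob{\tau_0^{n}\le t}$ for $n = \lceil c\,t^{1/(\alpha+2)}\rceil$ (and symmetrically for the other endpoint, then intersect and use a union bound on the complements).

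The key step is to control $\tau_0^n$, the time for $\mathfs{C}_0$ to connect with the cluster of particle $n$. First I would reduce to the number of discrete steps: by the time-change discussion, $\tau_0^n = T^0_{S_0^n}$, and I can use the $\alpha>0$-style inequality \eqref{fast>0} from Lemma \ref{lem:sizeBoundTime} in the opposite direction — namely a \emph{slow-down lower bound} needs the cluster to be \emph{small}, so on the event that $|\mathfs{C}_0|$ never exceeds some level $v$ up to the connection, the real time is at most $v^\alpha$ times the rate-$1$ clock sum $\widetilde{T}$. Concretely: before connecting, $|\mathfs{C}_0|$ and $|\mathfs{C}_n|$ are at most $n$ (they can only contain particles $0,\dots,n$ and $0,\dots,n$ respectively, giving the crude bound $|\mathfs{C}_0|\le n+1$), hence each of them runs at rate $\ge (n+1)^{-\alpha}$, so real time is at most $(n+1)^\alpha$ times a sum of i.i.d.\ $\mathrm{Exp}(1)$'s. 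The number of steps $S_0^n$ needed is, by the random-walk/geometric-increment analysis already used in the $\alpha=0$ proof, of order $(\text{gap})^2 = \Theta(n^2)$ with high probability (the gap $X_n - X_0 \sim \sum_{i=1}^n G_i$ with $G_i\sim\mathrm{Geo}(p)$ concentrates around $n\eta$, and a rate-$2$ random walk closes a gap of size $m$ in time $\Theta(m^2)$, equivalently $\Theta(m^2)$ steps, with a density that is bounded below on that scale — this is exactly \eqref{LCLT}). Putting these together: on a good event of probability $\ge 1-\ep$, $\tau_0^n \lesssim (n+1)^\alpha \cdot C' n^2 \asymp n^{\alpha+2} \asymp c^{\alpha+2}\, t$, which is $\le t$ once $c$ is small. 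More carefully, I would fix a target like $S_0^n \le K n^2$ and $\sum_{i=1}^{Kn^2}\widetilde t_i^0 \le 2Kn^2$ (WLLN), giving $\tau_0^n \le (n+1)^\alpha\cdot 2Kn^2 \le t$ provided $c^{\alpha+2}\cdot 2K\cdot(\text{const}) \le 1$; choosing $c$ small makes this hold.

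Assembling: pick $\ep>0$; choose $K$ and $T$ so that for $t>T$ the random-walk step-count event and the $\mathrm{Exp}(1)$-sum event each have probability $\ge 1-\ep$ (this uses \eqref{LCLT}, Hoeffding \eqref{hoeffdingBinom}, and the WLLN, exactly as in the earlier proofs); choose $c = c(\ep,K)$ small enough that $c^{\alpha+2}\cdot 2K\cdot(\text{const}) \le 1$; then for $t>T$, $\Prob{\tau_0^n \le t} \ge 1-2\ep$ and likewise for the symmetric endpoint, so $\Prob{|\mathfs{C}_0(t)|\ge c\,t^{1/(\alpha+2)}} \ge 1-4\ep$. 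Since $\ep$ is arbitrary and the chosen $c$ only has to be small, taking $t\to\infty$ then $c\to 0$ gives the claimed limit $1$.

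The main obstacle I anticipate is the direction of the monotonicity/size bound: the clean inequalities in Lemma \ref{lem:sizeBoundTime} are stated for connection \emph{between two fixed endpoints}, but here the cluster $\mathfs{C}_0$ may merge with other clusters \emph{on its left} or absorb particles between $0$ and $n$, which only speeds it up (more particles $\Rightarrow$ larger $|\mathfs{C}_0|$ at the end, good for us) but complicates the rate bound (larger cluster $\Rightarrow$ slower clock when $\alpha>0$). The crude bound $|\mathfs{C}_0|\le n+1$ throughout suffices because any particle in $\mathfs{C}_0$ before time $\tau_0^n$ lies in $\{0,\dots,n\}$ — wait, that needs justification: a priori $\mathfs{C}_0$ could have absorbed particles with negative index. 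The fix is to instead bound using only the particles strictly between the two relevant ones and note that the \emph{relative} motion of the left endpoint of $\mathfs{C}_0$'s block and the right endpoint of $\mathfs{C}_n$'s block is what matters; alternatively, restrict attention to the event that particles $-1$ and $0$ have \emph{not} connected by time $t$ (positive probability, and independent-ish of the gap-closing on the right by the random-walk structure), on which $\mathfs{C}_0 \subseteq \{0,\dots,n\}$ and the rate is genuinely $\ge (n+1)^{-\alpha}$. Handling this coupling/conditioning cleanly — so that the "cluster stays small" event and the "gap closes fast" event can be intersected with controlled probability loss — is the technical heart of the argument; everything else is a repackaging of the estimates already developed for $\alpha=0$ and the upper bound.
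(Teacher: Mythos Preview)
Your outline is close in spirit to the paper, but the obstacle you flag at the end is genuine and your proposed fixes do not resolve it. Conditioning on $\{-1 \overset{t}{\nleftrightarrow} 0\}$ is fatal: that event has probability tending to $0$ as $t\to\infty$, so intersecting with it cannot give a lower bound approaching $1$. Likewise, there is no a priori reason $|\mathfs{C}_n|$ stays below $n$ before the connection, since $\mathfs{C}_n$ may absorb arbitrarily many particles on its right.

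The key point you are missing is that you never need to bound $|\mathfs{C}_n|$ at all. The rate of the difference walk $S_u = X_n(u) - X_0(u)$ is $|\mathfs{C}_0(u)|^{-\alpha} + |\mathfs{C}_n(u)|^{-\alpha} \ge |\mathfs{C}_0(u)|^{-\alpha}$, and the \emph{event you are trying to bound}, $\{|\mathfs{C}_0(t)| < n\}$, already forces $|\mathfs{C}_0(u)| < n$ for every $u\le t$ by monotonicity. So on that event the difference walk has rate at least $n^{-\alpha}$ throughout $[0,t]$, with no assumption on $\mathfs{C}_n$ and no conditioning on the left side. From there your step-count and WLLN estimates go through: the walk makes order $n^{-\alpha}t = c^{-(\alpha+2)}n^2$ steps by time $t$, which for small $c$ exceeds the $\Theta(n^2)$ steps needed to close the gap, forcing $0\leftrightarrow n$ and hence $|\mathfs{C}_0(t)|\ge n$, a contradiction.

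The paper packages exactly this idea as a coupling: it couples the difference walk in $\mathfs{A}^\alpha$ with that in a time-rescaled $\mathfs{A}^0$, argues by contradiction that if $|\mathfs{C}_0^\alpha(t)| < ct$ then the $\alpha$-walk is faster than the rescaled $\alpha=0$ walk, and then invokes the exact $\alpha=0$ result (Theorem~\ref{alpha=0}) to lower-bound the connection probability. Your direct rate-comparison argument, once repaired as above, is essentially equivalent and arguably more transparent, since it avoids the bookkeeping of the explicit coupling.
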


The general idea for the proof is to couple to the cluster cluster model with $\alpha=0$. Then, until $\mathfs{C}_0$ reaches size $ct^{\frac{1}{\alpha+2}}$, we slow it to the rate of a cluster of that size. We can see that this slows down the cluster, and using the coupling we show that this gives a lower bound on the size.

\begin{proof}
Let $1>c>0$, $t\ge 0$. Denote by $\cb{\mathfs{A}_{s}^0}_{s\ge 0}$ a Cluster Cluster process with parameter $\alpha$. For the lower bound, we can consider the coupling between $\cb{\mathfs{A}_{s}^0}_{s\ge 0}$ and $\cb{\mathfs{A}_{s(2t)^\alpha}^\alpha}_{s\ge 0}$. 

For both processes, consider $S_u = X_{ct}(u)-X_0(u)$, denoting them by $S_u^{\alpha > 0}$ and $S_u^{\alpha = 0}$ accordingly. Note that $S_u$ is a SRW, absorbed at $ct$, with step times according to a non homogeneous Poisson process with rate $\lambda(u) = \abs{\mathfs{C}_0(u)}^{-\alpha}+\abs{\mathfs{C}_{ct}(u)}^{-\alpha}$. We couple the two (and thus couple the two processes) in the following way.

First we create a Discrete random walk $W_u$, and a rate 1 Poisson process $N(t)$ independently of each other. Then we create $S_u$ by taking the inter arrival times of $N(u(t))$, denoted by $t_i$, as the step times (again here $u(t) = \int_{0}^{t}{\lambda(s) ds}$). Then we decide the step direction according to $W_i$.

We first show that under the above coupling, if for $\mathfs{A}^0$ we have $0 \overset{t}{\leftrightarrow} 2ct$, then for $\mathfs{A}^\alpha$ we have $\abs{\mathfs{C}_0(t)} \ge ct$. We assume by contradiction that at time $t$ we have $0 \overset{t}{\leftrightarrow} 2ct$ and $\abs{\mathfs{C}_0(t)} < ct$. This means that that in the $\mathfs{A}^\alpha$, up to time $t$ the rate of $S_u$ was at least $\rb{ct}^{-\alpha} \ge t^{-\alpha}$, so $u^{\alpha > 0}(\cdot) \ge u^{\alpha = 0}(\cdot)$ which gives that $t_i^{\alpha > 0} \le t_i^{\alpha = 0}$. By the coupling in both processes it takes the same amount of steps, given by $W_u$, until the two clusters connect. Since all the step times in the case $\alpha \ge 0$ are faster up to time t, we get that $0 \overset{t}{\leftrightarrow} 2ct$ in the dynamic process which implies $\abs{\mathfs{C}_0(t)} \ge 2ct$ which is a contradiction.


From this we get that (after adjusting for time scaling):
\begin{equation}
    \Prob[\alpha > 0]{|\mathfs{C}_0(2t^{2+\alpha})| \ge ct} \ge \Prob[\alpha = 0]{0 \overset{t^2}{\leftrightarrow} 2ct}
\end{equation}
We can combine this with the fact that (by symmetry and inclusion)
\begin{equation}
    \Prob[\alpha = 0]{0 \overset{t^2}{\leftrightarrow} 2ct} \ge \frac{1}{2} \Prob{\abs{\mathfs{C}_0(t^2)} \ge 4ct}
\end{equation}
and with Theorem \ref{alpha=0} to get that for all $\epsilon > 0$, exists $1>c>0$ s.t:
$$\lim_{t \to \infty}{\Prob[\alpha > 0]{|\mathfs{C}_0(t)|\ge ct^{\frac{1}{\alpha + 2}}}} \ge 1-\epsilon$$
\end{proof}
\section{$\alpha \le 0$}
\subsection{lower bound}
Define recursively $\gamma_1 = \oov{2}$, $\gamma_{n+1} = \frac{1-\alpha\gamma_n}{2}$.

\begin{theorem}
    For $-2 < \alpha < 0$, $\forall \ep>0, n\in \BN, \exists c>0$ s.t:
    \begin{equation}
        \lim_{t \to \infty}{\Prob{\abs{\mathfs{C}_0(t)} \le ct^{\gamma_n}}} \le \ep  
    \end{equation}
    For $\alpha < -2$:
    \begin{equation}
        \lim_{t \to \infty}{\Prob{\abs{\mathfs{C}_0(t)} < \infty}} = 0
    \end{equation}
\end{theorem}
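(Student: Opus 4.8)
The plan is to prove the two statements by a bootstrap (induction on $n$) for the case $-2<\alpha<0$, and then to observe that the same mechanism, pushed to the limit, forces blow-up in finite time when $\alpha<-2$. The base case $\gamma_1=\tfrac12$ already follows from the $\alpha=0$ analysis combined with the comparison Lemma \ref{lem:sizeBoundTime}: since $\alpha<0$ slows a large cluster down relative to rate one, a cluster of size $\gtrsim\sqrt{t}$ under the $\alpha=0$ dynamics is, after time-rescaling, a lower bound for the $\alpha<0$ dynamics up to the first time the size exceeds $c\sqrt t$; one feeds in Theorem \ref{alpha=0} (equivalently \eqref{LCLT}) to get $\Prob{|\mathfs{C}_0(t)|\le c\sqrt t}\le\ep$ for $c$ small. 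For the inductive step, suppose we already know $|\mathfs{C}_0(s)|\gtrsim s^{\gamma_n}$ with high probability for all large $s$. Then at scale $s$ the cluster of $0$ (and the cluster a distance $\sim c s^{\gamma_n}$ to its right) each have size at least $\sim c s^{\gamma_n}$, so their Poisson clocks ring at rate at most $(cs^{\gamma_n})^{-\alpha}=c^{-\alpha}s^{-\alpha\gamma_n}$ once they are that large. The displacement process $S_u=X_{m}(u)-X_0(u)$ for $m\sim c' t^{\gamma_{n+1}}$ is a symmetric random walk absorbed at $m$, whose number of steps to absorption is $\sim m^2\sim t^{2\gamma_{n+1}}$ by the local CLT and the $Geo(p)$-spacing estimate \eqref{hoeffdingBinom}. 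Using \eqref{fast<0} from Lemma \ref{lem:sizeBoundTime} with $v\sim c t^{\gamma_n}$, the real time needed to realize those $\sim t^{2\gamma_{n+1}}$ steps is at least $\sim v^{\alpha}\tilde T\sim t^{\alpha\gamma_n}\cdot t^{2\gamma_{n+1}}=t$, precisely because $2\gamma_{n+1}-\alpha\gamma_n\cdot(-1)$... more carefully: $2\gamma_{n+1}=1-\alpha\gamma_n$ is the defining recursion, so $t^{\alpha\gamma_n}\cdot t^{2\gamma_{n+1}}=t^{\alpha\gamma_n+1-\alpha\gamma_n}=t$. Hence with high probability the clusters of $0$ and of $m$ have not yet connected at time $t$, i.e. $|\mathfs{C}_0(t)|< m\sim c't^{\gamma_{n+1}}$ fails, giving $\Prob{|\mathfs{C}_0(t)|\le c' t^{\gamma_{n+1}}}\le\ep$.

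More precisely, I would run the induction keeping track of a high-probability event $E_n(t)$ on which $|\mathfs{C}_0(s)|\ge c_n s^{\gamma_n}$ simultaneously for all $s$ in a dyadic range $[t^{1-o(1)},t]$, so that the size lower bound is available throughout the time window in which the relevant steps are taken, not just at the single time $t$. On $E_n(t)$, from the first time $\sigma$ that $|\mathfs{C}_0|$ reaches $c_n s^{\gamma_n}$ onward the clock rate of each of the two clusters is bounded by $(c_n s^{\gamma_n})^{-\alpha}$, so the conditional law of the step times $t_i^0$ is stochastically dominated by $v^{\alpha}\tilde t_i$ with $v=c_n t^{\gamma_n}$, where $\tilde t_i$ are i.i.d. $\exp(1)$; this is exactly what \eqref{fast<0} packages. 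Summing $\sim t^{2\gamma_{n+1}}$ of them and applying a one-sided concentration bound (WLLN/Cramér) shows $\sum_{i\le t^{2\gamma_{n+1}}} v^\alpha\tilde t_i \ge \tfrac12 v^\alpha t^{2\gamma_{n+1}} = \tfrac12 c_n^\alpha t$ with probability $1-o(1)$, and choosing $c'$ small (so that the walk genuinely needs $\gtrsim t^{2\gamma_{n+1}}$ steps with high probability, via \eqref{hoeffdingBinom} and the reflection/LCLT estimate) makes $\tfrac12 c_n^\alpha>1$, so connection by time $t$ is impossible on this event. Taking a union bound over the $O(\log t)$ dyadic scales and over the finitely many inductive levels keeps the total error below $\ep$.

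For $\alpha<-2$: here $\gamma_{n+1}=\tfrac{1-\alpha\gamma_n}{2}$ has $\gamma_{n+1}-\gamma_n=\tfrac{1-\alpha\gamma_n-2\gamma_n}{2}=\tfrac{1-(\alpha+2)\gamma_n}{2}$; since $\alpha+2<0$ this difference is $\ge\tfrac12$ for every $n$, so $\gamma_n\to\infty$. Thus the same estimates give, for every fixed $N$, that $|\mathfs{C}_0(1)|\ge c_N$ — more usefully, that for every fixed time $T_0$ the cluster of $0$ has size at least $c_N T_0^{\gamma_N}$ with probability $1-\ep$, for arbitrarily large $\gamma_N$; equivalently, a cluster reaches any prescribed finite size $M$ before any prescribed time $T_0$ with probability $\ge 1-\ep(M,T_0)$ that does not degenerate. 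Then, running the argument one more step: a cluster of size $M$ performs its next $\sim M$ steps (enough to merge with its $\Theta(M)$-many near neighbours, which are themselves of size $\Theta(M^{1/2})$ or more) in total time $\lesssim M\cdot M^{\alpha}\cdot(\text{i.i.d. }\exp(1))\sim M^{1+\alpha}$, and since $1+\alpha<-1$ these times are summable: $\sum_k (M 2^k)^{1+\alpha}<\infty$. So once a cluster is large it doubles in size again and again in geometrically decreasing times, reaching infinite size in finite time almost surely; taking $M\to\infty$ and $T_0$ fixed shows $\Prob{|\mathfs{C}_0(T_0)|<\infty}\to 0$, which is the claim. The main obstacle is the bookkeeping in the inductive step — making the size lower bound hold uniformly over the whole time window during which the $\sim t^{2\gamma_{n+1}}$ steps occur (a single-time bound is not enough, because the cluster could be small early and only grow late), and controlling the fact that the clock rate depends on the sizes of \emph{both} merging clusters and on all the other clusters that may join along the way; both are handled by working on the high-probability event $E_n$ and by noting that extra merges only make clusters larger, hence only slow the clocks further, which is the favourable direction for the lower bound.
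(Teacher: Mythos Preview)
Your overall plan --- bootstrap through the scale exponents $\gamma_n$ by using that once the cluster has size $\sim t^{\gamma_n}$ it takes only $\sim t$ further time to connect at scale $t^{\gamma_{n+1}}$ --- is indeed what the paper does. But the inequalities in your execution are reversed at essentially every step, so as written the argument proves nothing (and where it concludes anything, it concludes the wrong direction).

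Concretely: for $\alpha<0$ the rate $|\mathfs{C}|^{-\alpha}$ is \emph{increasing} in the size, so large clusters move \emph{faster}, not slower. Once $|\mathfs{C}_0|\ge v=c_n t^{\gamma_n}$ the clock rate is bounded \emph{below} by $v^{-\alpha}$, step times are stochastically \emph{at most} $v^\alpha\tilde t_i$, and the relevant concentration is the \emph{upper} tail: $\sum_{i\le N}\tilde t_i\le 2N$ w.h.p., so the real time for $N\sim (c't^{\gamma_{n+1}})^2$ steps is at most of order $v^\alpha N = c_n^{\alpha}(c')^2\, t$. One then chooses $c'$ so that $c_n^{\alpha}(c')^2$ is small and concludes that the connection \emph{does} happen by time $t$, i.e.\ $|\mathfs{C}_0(t)|\ge c't^{\gamma_{n+1}}$. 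Your text instead bounds the rate from above, invokes the lower tail, and ends with ``connection by time $t$ is impossible'', which would be an upper bound on the size --- the opposite of the theorem. (The sentence ``have not yet connected \ldots\ i.e.\ $|\mathfs{C}_0(t)|<m$ fails'' is internally inconsistent for the same reason.) The correct one-line use of Lemma~\ref{lem:sizeBoundTime} here is \eqref{fast<0}, which says that conditionally on $|\mathfs{C}_x(T_k^x)|\ge v$ it is unlikely that $T_{k,n}$ \emph{exceeds} $\tau$ unless $v^\alpha\tilde T_{k,n}$ does.

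On structure: the paper avoids your dyadic-in-time machinery entirely. For a single fixed $t$ it places markers at $C_1 t^{\gamma_1}<C_2 t^{\gamma_2}<\cdots<C_n t^{\gamma_n}$, bounds the number of steps between successive markers from above (events $A_n$), and then, using \eqref{fast<0} with $v=C_k t^{\gamma_k}$ (which is automatic once the $k$-th marker is absorbed --- no uniform-in-$s$ hypothesis is needed), shows the $(k{+}1)$-st connection costs at most $t\cdot c^*/(k+1)^2$ real time; summing in $k$ gives total time $\le t$. The constants $C_k$ and the step-count bounds $a_k$ are tuned so the error probabilities are summable. For $\alpha<-2$ the same computation works for \emph{all} $k$ simultaneously (because $\gamma_k\to\infty$ makes the growth conditions \eqref{NegativeAlphaCond2}, \eqref{NegativeAlphaCond4} hold uniformly in $k$ for large $t$), so one lets $n\to\infty$ directly; there is no need for a separate geometric-doubling argument.
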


The general idea for the proof, is that we consider the difference random walk of cluster $\mathfs{C}_0$ with all clusters $\mathfs{C}_{C_nt^{\gamma_n}}$ for correctly chosen constants $C_n$, and $\gamma_n$ as described above. 
We ensure the amount of steps is correct in the same way as above (with error rate $\frac{\ep}{n^2}$ which converges), and then use the speed up of earlier connections when bounding the time the later connections take. 
We can see that $\gamma_n$ is chosen exactly so that after we apply the speed up (i.e Lemma \ref{lem:M bounded size alpha<0}) we get exactly the correct scale. 
See figure \ref{fig:alpha negative lower bound} for the relevant clusters.

\begin{figure}
    \begin{center}
        \begin{tikzpicture}
            \draw[thick] (-1,0) -- (8,0);
            
            \fill (0,0) circle (2pt) node[below] {$0$};
            \fill (1,0) circle (2pt) node[below] {$C_1 t^{\gamma_1}$};
            \fill (2.2,0) circle (2pt) node[below] {$C_2 t^{\gamma_2}$};
            \fill (4,0) circle (2pt) node[below] {$C_3 t^{\gamma_3}$};
            
            \node at (5.5,-0.3) {$\dots$};

            \fill (7,0) circle (2pt) node[below] {$C_n t^{\gamma_n}$};
        \end{tikzpicture}
    \end{center}
    \caption{The clusters involved in the proof of the lower bound for $\alpha \le 0$}
    \label{fig:alpha negative lower bound}
\end{figure}

\begin{proof}
Let $\ep \ge 0$, $n \in \BN$ and $\epsilon_n = \frac{\ep}{n^2}$. Let $C_n>0$ (to be chosen later). Let $a_n \to \infty$ not a function of $C_n$ such that: 
$$A_n = \cb{S_0^{C_nt^{\gamma_n}} + S_{C_nt^{\gamma_n}}^0 \le a_n\rb{C_nt^{\gamma_n}}^2}$$
$$\Prob{A_n} \ge 1- \epsilon_n,$$
where we ensure the existence of $a_n$ by \eqref{boundEventsA}, requiring that
\begin{equation}\label{NegativeAlphaCond1}
    \begin{aligned}
        1-c_ge^{-c_ga_n} &\ge 1 - \frac{\ep}{n^2}\\
        a_n &\ge c_g\ln{\frac{n^2}{c_g\ep}},
    \end{aligned}
\end{equation}
and also requiring 
\begin{equation}\label{NegativeAlphaCond2}
    C_nt^{\gamma_n} > 1.
\end{equation}


Using this, we can see that for all $t>0$ large enough, for any constant $c_g>0$, by law of total probability on the event $A_n$:

\begin{equation}\label{eq:boundiidlength}
    \Prob{\sum_{i=S_0^{C_nt^{\gamma_n}} +1}^{S_0^{C_{n+1}t^{\gamma_{n+1}}}}{t_i^0}>\frac{t}{c_g(n+1)^2}} 
    \le
    \Prob{\sum_{i=S_0^{C_nt^{\gamma_n}} + 1}^{a_nC_{n+1}^2t^{2\gamma_{n+1}}}{t_i^0}>\frac{t}{c_g(n+1)^2}} + \epsilon_n
\end{equation}

By Lemma \ref{lem:sizeBoundTime}:

\begin{equation}\label{eq:boundiidlength2}
    \begin{aligned}
        &\le \Prob{\oov{N}\sum_{i=1}^{N}{\Tilde{t}^0_i} > C_n^{-\alpha}t^{-\gamma_n\alpha}t\oov{c_g (n+1)^2 a_n C_{n+1}^2t^{2\gamma_{n+1}}}}+ \epsilon_n\\
        &= \Prob{\oov{N}\sum_{i=1}^{N}{\Tilde{t}^0_i} > \frac{C_n^{-\alpha}}{c_g (n+1)^2 a_n\rb{C_{n+1}}^2}}+ \epsilon_n,
    \end{aligned}
\end{equation}

where $N = a_nC_{n+1}^2t^{2\gamma_{n+1}}$ the number of summed R.Vs, which bounds from above the number of elements in the RHS of \eqref{eq:boundiidlength}.
We now require
\begin{equation}\label{NegativeAlphaCond3}
    \frac{C_{n}^{-\alpha}}{c_g (n+1)^2 a_n(C_{n+1})^2} \ge 2.
\end{equation}
Since $exp(1)$ R.V is sub-exponential, using Bernstein's inequality

\begin{equation}
    \begin{aligned}
        \Prob{\oov{N}\sum_{i=1}^{N}{\Tilde{t}^0_i} > \frac{C_n^{-\alpha}}{c_g (n+1)^2 a_n\rb{C_{n+1}}^2}}
        \le 
        \Prob{\oov{N}\sum_{i=1}^{N}{\Tilde{t}^0_i}-1 > 1}
        \le
        e^{-c_gN}
        \le
        \ep_n
    \end{aligned}
\end{equation}

where to satisfy the last inequality we add the condition

\begin{equation}
    \begin{aligned}\label{NegativeAlphaCond4}
        e^{-c_gN} &\le \frac{\ep}{n^2} \\
        N &\ge c_g\ln{\frac{n^2}{\ep}} \\
        a_nC_{n+1}^2t^{2\gamma_{n+1}} &\ge c_g\ln{\frac{n^2}{\ep}}.
    \end{aligned}
\end{equation}

Overall we get

\begin{equation}\label{eq:eqbound}
    \begin{aligned}
        \Prob{\sum_{i=S_0^{C_nt^{\gamma_n}}}^{S_0^{C_{n+1}t^{\gamma_{n+1}}}}{t_i^0}>t\frac{c_g}{(n+1)^2}} \le 2\epsilon_n.
    \end{aligned}
\end{equation}

We must satisfy conditions \eqref{NegativeAlphaCond1}, \eqref{NegativeAlphaCond2}, \eqref{NegativeAlphaCond3}, \eqref{NegativeAlphaCond4}. 
For condition \eqref{NegativeAlphaCond1} we take $a_n = g_\ep\ln{n}$ for constant $g_\ep$ large enough depending only on $\ep$. 
For condition \eqref{NegativeAlphaCond3} we take $C_n = g_c n^{-\frac{3}{\alpha+2}}$ for constant $g_c$ s.t if we plug into \eqref{NegativeAlphaCond3}:
\begin{equation}
    \begin{aligned}
        \frac{g_c n^{\frac{3\alpha}{\alpha+2}}}{c_g n^2 g_\ep\ln{n} g_c^2 (n+1)^{-\frac{6}{\alpha+2}}} \ge \frac{n^{\frac{3\alpha}{\alpha+2}-2+\frac{6}{\alpha+1}}}{c_gg_cg_\ep\ln{n}} = \frac{n}{c_gg_cg_\ep\ln{n}} \ge \frac{1}{c_gg_cg_\ep} \ge 2 .
    \end{aligned}
\end{equation}
Note that the justification for the first inequality differs between cases $\alpha<-2$ and $\alpha>-2$.

For the last two conditions \eqref{NegativeAlphaCond2}, \eqref{NegativeAlphaCond4}, we first consider the case $-2 < \alpha < 0$. 
In this case, for any fixed $n\in \BN$, we have $C_nt^{\gamma_n} \to \infty$ and $a_nC_{n+1}^2t^{2\gamma_{n+1}} \to \infty$ as $t \to \infty$. 
Given $K \in \BN$, we can have these conditions satisfied $\forall n\le K$ by taking $t$ large enough.  

For the case $\alpha < -2$, if $t>1$ then $C_nt^{\gamma_n} \to \infty$ and $a_nC_{n+1}^2t^{2\gamma_{n+1}} \to \infty$ as $n \to \infty$. 
This is because $\gamma_n = \Omega(n)$ resulting in $t^{\gamma_n} = \Omega(e^n)$, and $a_n = \Omega(1)$, $C_n = \Omega(n^{-\frac{3}{\alpha+2}})$. 
As a result, for some fixed $K \in \BN$, we can have these condition satisfied $\forall n>K$ and $t>1$. For $n<K$ we can take $t$ large enough in the same way as in the case of $-2 < \alpha$.   

To summarize, if $\alpha < -2$, then $\exists T>0$ s.t for $t>T$, \eqref{eq:eqbound} holds $\forall n \in \BN$, and if $0 > \alpha > -2$ then $\forall N, \exists T>0$, s.t for $t>T$, \eqref{eq:eqbound} holds $\forall n<N$.

Now we prove the main claim:

\begin{equation}
    \Prob{0 \overset{t}{\nleftrightarrow} C_nt^{\gamma_n}} 
    = 
    \Prob{\sum_{i=1}^{S_0^{C_nt^{\gamma_n}}}{t_i^0}>t\, \, \text{or} \sum_{i=1}^{S_{C_nt^{\gamma_n}}^0}{t_i^{C_nt^{\gamma_n}}}>t} \le 2\Prob{\sum_{i=1}^{S_0^{C_nt^{\gamma_n}}}{t_i^0}>t},
\end{equation}
where in the first equality follows from the definitions of $S_0^x$ - a connection occurs if and only if both clusters have completed the steps required for connection before time $t$.

We can split this to $n$ different sums (for simplicity $\gamma_0 = 0, C_0 = 0, S^0_0 = 0$), and use \eqref{eq:eqbound} with constant $c^* = \rb{\sum_{i=1}^{\infty}{i^{-2}}}^{-1}$: 
\begin{equation}
    \begin{aligned}
        \Prob{\sum_{i=1}^{S_0^{C_nt^{\gamma_n}}}{t_i^0}>t}
        &\le
        \Prob{\bigcup_{j=1}^{n}\cb{\sum_{i={S_0^{C_{j-1}t^{\gamma_{j-1}}}}+1}^{S_0^{C_jt^{\gamma_j}}}{t_i^0}>t\frac{c^*}{j^2}}}\\
        &\le
        \sum_{j=1}^{n}{\Prob{\sum_{i={S_0^{C_{j-1}t^{\gamma_{j-1}}}}+1}^{S_0^{C_jt^{\gamma_j}}}{t_i^0}>t\frac{c^*}{j^2}}} \\
        &\le
        2\sumn{\epsilon_i}\\
        &\le 
        c_g\ep.
    \end{aligned}
\end{equation}
For the first inequality, we used the fact that for any $n$ numbers $x_i \in \mathds{R}, i \in [n]$, and any $p \in \Delta_n$, for any value $v\in \mathds{R}$ it holds that
\begin{equation}
    \sum_{i=1}^{n} x_i > v \Rightarrow \exists i\in [n]: x_i > p_i v.
\end{equation}
Otherwise we would get that $\forall i\in [n]: x_i \le p_i v$, which gives $\sumn{x_i} \le v$.

Using this bound, we can see that
\begin{equation}
    \Prob{|\mathfs{C}_0(t)|\le C_nt^{\gamma_n}} \le \Prob{0 \nleftrightarrow C_nt^{\gamma_n}, 0\nleftrightarrow - C_nt^{\gamma_n}} \le 2\Prob{0 \nleftrightarrow C_nt^{\gamma_n}}
    \le c_g\ep.
\end{equation}
In the case $-2<\alpha<0$, this will give us that for any fixed $n$, $\exists T$ s.t $\forall t>T$:
\begin{equation}
    \Prob{|\mathfs{C}_0(t)|\le C_nt^{\gamma_n}} \le \epsilon.
\end{equation}
In the case $\alpha < -2$, since $\exists T$ s.t for $t>T$ this holds $\forall n$, we can take $n \to \infty$ and get that 
\begin{equation}
    \Prob{|\mathfs{C}_0(t)| < \infty} \le \epsilon.
\end{equation}
By the definition of limit, this exactly gives us that
\begin{equation}
    \lim_{t \to \infty}{\Prob{|\mathfs{C}_0(t)| < \infty}} = 0. 
\end{equation}
\end{proof}

\subsection{upper bound}
\begin{lemma}\label{lem:M bounded size alpha<0}
    Let $M>0, -2<\alpha < 0$ and consider the process $\mathfs{A}_t^M$ having rates upper bounded by $M$.
    $\forall \ep>0$, exists $\exists c>0,T>0$ such that $\forall t>T $
    \begin{equation}
        \Prob{\abs{\mathfs{C}_0(t)} \ge ct^{\frac{1}{\alpha+2}}} \le \ep ~ ,
    \end{equation}
    where $c = c_\alpha\log{\ep^{-1}}$, and $c_\alpha$ is a constant that may depend on $\alpha$.
\end{lemma}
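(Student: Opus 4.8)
The plan is to reduce the upper bound for the $M$-rate-bounded process to a counting estimate on the number of random-walk steps needed to grow a cluster of size $ct^{1/(\alpha+2)}$, and then invoke the coupling/time-change machinery of Lemma~\ref{lem:sizeBoundTime}. Concretely, if $\abs{\mathfs{C}_0(t)}\ge k:=ct^{1/(\alpha+2)}$, then in particular $\mathfs{C}_0$ is connected to every cluster lying between its left and right endpoints; by translation invariance and symmetry it suffices to bound $\Prob{0 \overset{t}{\leftrightarrow} k'}$ for a suitable $k'$ comparable to $k$ (the number of \emph{distinct starting sites} absorbed, which by the $Geo(p)$ gap structure is $\Theta(k/\eta)$ up to exponentially unlikely deviations, exactly as in \eqref{hoeffdingBinom}). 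For such a connection to occur by time $t$, the cluster $\mathfs{C}_0$ must have completed $S_0^{k'}$ discrete steps by time $t$, and since the difference walk $X_{k'}-X_0$ must travel distance $\Theta(k'\eta)\sim \Theta(ct^{1/(\alpha+2)})$, we have $S_0^{k'}\gtrsim (ct^{1/(\alpha+2)})^2 = c^2 t^{2/(\alpha+2)}$ except on an event of probability $O(e^{-c^{g} t^{\beta}})$ for some $\beta>0$ (Lawler Prop.~2.4.5, as used in \eqref{boundEventsA}).

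Next I would translate the step count into elapsed time. On the event that $\mathfs{C}_0$ stays below size $ct^{1/(\alpha+2)}$ up to its $S_0^{k'}$-th step — which is implied by the very event we are bounding, $\abs{\mathfs{C}_0(t)}\ge k$ being the \emph{complement}, so more carefully: I intersect with the event $\{\abs{\mathfs{C}_0(T_{m}^0-)}\le ct^{1/(\alpha+2)}\}$ where $m = c^2 t^{2/(\alpha+2)}$, which holds automatically because a cluster of size $\ge ct^{1/(\alpha+2)}$ starting near $0$ forces at least $m$ steps only after it has reached that size, so up to step $m$ it is still small — since $\alpha<0$, a cluster capped at size $v$ has rate at least $v^{-\alpha}$, i.e.\ \emph{slower} than rate $1$ is wrong; rather rate $v^{-\alpha}\le (ct^{1/(\alpha+2)})^{-\alpha}$ since $-\alpha>0$ would mean the rate is at most that, but here we also have the external cap $M$. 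The correct move is to apply \eqref{fast<0} from Lemma~\ref{lem:sizeBoundTime}: with $v = ct^{1/(\alpha+2)}$, $\tau = t$, $k=0$, $n=m$, we get
\begin{equation}
    \Prob{T_m^0 \ge t,\ \abs{\mathfs{C}_0(T_m^0-)}\le v} \le \Prob{v^\alpha \Tilde{T}_m^0 \ge t},
\end{equation}
and since the connection time is at most $T_m^0$ only when it has \emph{not} occurred, I instead use that $\{0\overset{t}{\leftrightarrow}k'\}$ with $S_0^{k'}\ge m$ forces $T_m^0 \le t$, so I want the reverse event and should use \eqref{slow<0} to bound $\Prob{T_m^0\le t,\ \abs{\mathfs{C}_0(T_m^0-)}\le v}\le \Prob{v^{\alpha}\Tilde T_m^0 \le t}$. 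Plugging in, this is $\Prob{\sum_{i=1}^{m}\Tilde t_i^0 \le v^{-\alpha} t} = \Prob{\sum_{i=1}^{m}\Tilde t_i^0 \le c^{-\alpha} t^{1 + (-\alpha)/(\alpha+2)}} = \Prob{\sum_{i=1}^{m}\Tilde t_i^0 \le c^{-\alpha}t^{2/(\alpha+2)}}$, a sum of $m = c^2 t^{2/(\alpha+2)}$ i.i.d.\ $\exp(1)$ variables whose mean is $m$; this is small precisely when $c^{-\alpha} < c^2$, i.e.\ for $c$ large, and by Bernstein's inequality the deviation probability is $\le e^{-c_g m} = e^{-c_g c^2 t^{2/(\alpha+2)}}$. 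Summing the two error contributions and choosing $c = c_\alpha\log\ep^{-1}$ makes the total $\le \ep$ for all $t>T$.

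The main obstacle I anticipate is bookkeeping the logical direction of the time-change inequalities: Lemma~\ref{lem:sizeBoundTime} gives one-sided bounds that pair a \emph{size} constraint with a \emph{time} constraint, and one must be careful that the size cap ``$\abs{\mathfs{C}_0}\le ct^{1/(\alpha+2)}$'' is genuinely entailed by the event being estimated rather than assumed separately — here it is, because up to the $m$-th step the cluster cannot yet have exceeded $ct^{1/(\alpha+2)}$ (it would need $\ge m$ steps to do so), so intersecting with the cap is free. A secondary point is that the external rate bound $M$ only \emph{helps}: since it can only slow clusters further (rates are bounded by $M$, i.e.\ replaced by $\min(M, \abs{\mathfs{C}}^{-\alpha})$), the coupling shows $\abs{\mathfs{C}_0^M(t)}$ is stochastically dominated by $\abs{\mathfs{C}_0(t)}$ of the unbounded process — wait, slower rate means \emph{smaller} cluster, so the bound for the unbounded process transfers directly, and $M$ plays no adversarial role. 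Finally one must check the $Geo(p)$-gap reduction from ``$\abs{\mathfs{C}_0(t)}\ge k$'' to ``$0\overset{t}{\leftrightarrow}k'$'' contributes only an $O(e^{-t^{\beta}})$ error, which follows from the same Hoeffding estimate \eqref{hoeffdingBinom} already in hand.
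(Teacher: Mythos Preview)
Your argument has a genuine gap at the step you yourself flag as the ``main obstacle'': the claim that the size cap $\abs{\mathfs{C}_0(T_m^0-)}\le v$ comes for free because ``up to the $m$-th step the cluster cannot yet have exceeded $ct^{1/(\alpha+2)}$.'' This is false. The step count $S_0^{k'}$ records only steps taken by the cluster containing particle $0$; it says nothing about clusters that walk \emph{into} $\mathfs{C}_0$ from the left. Concretely, $\mathfs{C}_{-1},\mathfs{C}_{-2},\ldots$ can each walk right and merge into $\mathfs{C}_0$ while $\mathfs{C}_0$ takes zero steps of its own, making $\abs{\mathfs{C}_0}$ arbitrarily large with $T_m^0$ still in the future. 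So on the event $\{0\overset{t}{\leftrightarrow}k'\}\cap\{S_0^{k'}\ge m\}$ you have no control over $\abs{\mathfs{C}_0(T_m^0-)}$, and the application of \eqref{slow<0} is unjustified. (A related smaller issue: the random-walk estimate gives a lower bound on $S_0^{k'}+S_{k'}^0$, not on $S_0^{k'}$ alone; splitting on which cluster did more walking fixes this, but then you must control the size of \emph{that} cluster, and the same left-side problem recurs for it.)

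This is precisely why the paper's proof is so much more involved. It lays down an infinite two-sided sequence of landmark clusters $\mathfs{C}_{j_i}$, $i\in\BZ$, and runs an induction: if $\mathfs{C}_{j_n}$ connects to $\mathfs{C}_{j_{n+1}}$, split on which side did more walking. If the walker has \emph{not} connected to the next landmark on its far side, then its size is genuinely bounded by $\Delta j_{n}+\Delta j_{n+1}$ and \eqref{slow<0} applies; otherwise, recurse one step further out. The induction leaves a residual event $\bigcap_i G_i$ (all landmarks connected by time $t$), and here is the only place the cap $M$ is actually used: bounded rates force $\Prob{\bigcap_i G_i}=0$. Your remark that ``$M$ only helps'' via monotonicity misses this role; without the cap the chain might not terminate (indeed, for $\alpha<-2$ it does not). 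Your single-scale estimate cannot substitute for this chain argument because the size bound you need at each stage is supplied by the \emph{next} stage, not by the step count.
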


The general idea of the proof is to consider the clusters $\mathfs{C}_{j_n}$, where the $j_n$ are chosen with increased spacing so that we can union bound the noise resulting from the random walk between each pair of Clusters $\mathfs{C}_{j_n}, \mathfs{C}_{j_{n+1}}$. 
See figure \ref{fig:negative alpha upper bound} for the relevant clusters.
Then we show that none of the clusters $\mathfs{C}_{j_n}$ have enough time to complete the steps needed. 

The way to show this is by first consider the event that $\mathfs{C}_{j_0}$ connects to $\mathfs{C}_{j_1}$. 
Then we split into cases - if $\mathfs{C}_{j_0}$ walked more steps towards $\mathfs{C}_{j_1}$, then we know those steps were taking before the connection, hence we have a bound on the size at the time of the steps and can use lemma \ref{lem:sizeBoundTime}. 
If $\mathfs{C}_{j_1}$ took more steps, then either it connected to $\mathfs{C}_{j_2}$ in the relevant time or not. 
Again, if it didn't connect to $\mathfs{C}_{j_2}$, we have a bound on the size during the steps and can use lemma \ref{lem:sizeBoundTime}. 
Now if we require the connection between $\mathfs{C}_{j_1}$ and $\mathfs{C}_{j_2}$, then we are back at the point we started at except on step to the right. 
We can keep doing this inductively and be left with the event that $\mathfs{C}_{j_0}$ is connected to $\mathfs{C}_{j_n}$ which goes to $0$ in probability as $n \to \infty$.

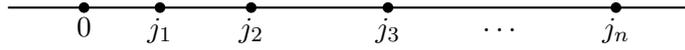
\begin{figure}
    \begin{center}
        \begin{tikzpicture}
            \draw[thick] (-1,0) -- (8,0);
            
            \fill (0,0) circle (2pt) node[below] {$0$};
            \fill (1,0) circle (2pt) node[below] {$j_1$};
            \fill (2.2,0) circle (2pt) node[below] {$j_2$};
            \fill (4,0) circle (2pt) node[below] {$j_3$};
            
            \node at (5.5,-0.3) {$\dots$};
            
            \fill (7,0) circle (2pt) node[below] {$j_n$};
        \end{tikzpicture}
    \end{center}
    \caption{The relevant clusters for the upper bound for $-2 < \alpha < 0$}
    \label{fig:negative alpha upper bound}
\end{figure}
        
\begin{proof}
    
Let $\ep > 0$. Define $\ep_i = \frac{\ep}{i^2}$ and consider the particles/clusters $\cb{\mathfs{C}_{j_i}}_{i \in \mathds{Z}}$ for some fixed $\cb{j_i}_{i \in \mathds{Z}}$. We denote $\Delta j_i = j_{i+1}-j_i$ and choose $j_i$ s.t $\Delta j_i = \Delta j_{-i}$ to ensure symmetry.
We now define the following events:
\begin{gather}\label{ABevents}
    A_i = \cb{S_{j_i}^{j_{i+1}}+S_{j_{i+1}}^{j_i} \ge 2\zeta_i\rb{\Delta j_i}^2} ,
    A = \bigcap_{i=-\infty}^{\infty}{A_i}                      \\
    B_i^r = \cb{S_{j_i}^{j_{i+1}} \ge S_{j_{i+1}}^{j_i}},
    B_i^\ell = \cb{S_{j_i}^{j_{i-1}} \ge S_{j_{i-1}}^{j_i}} ,
    B_i = B_i^\ell \cap B_i^r                                  \\
    G_i = \cb{j_i \overset{t^\gamma}{\leftrightarrow} j_{i+1}} = \cb{T^{j_i}_{S_{j_i}^{j_{i+1}}} < t^\gamma} \cap \cb{T^{j_{i+1}}_{S_{j_{i+1}}^{j_i}} < t^\gamma}
\end{gather}
And by \eqref{boundEventsA}, we choose $\zeta_i$ and $j_i$ s.t $\Prob{A_i} \ge 1-\ep_i$, and thus $\Prob{A} \ge 1-\ep$.

Now we begin the main argument of the proof. We will use the following bounds which will be proved later:
\begin{equation}\label{eq:boundsHelp}
    \begin{aligned}
        \Prob{A,B_{n+1}^\ell, G_n, G_{n+1}^C} &\le \ep_n \\
        \Prob{A,B_n,G_n} &\le \ep_{n} \\
        \Prob{A,G_0,G_{-1}^C,G_{1}^C} &\le \ep_0.
    \end{aligned}
\end{equation}
Using these bounds, we split the following event to several disjoint events using $B_i^\ell$ and $B_i^r$:
\begin{equation}\label{mainArgument}
    \begin{aligned}
        \Prob{G_0} 
        &\le \Prob{A,G_0} +\ep \\
        &\le \Prob{A,G_0, B_0} + \Prob{A,G_0,B_0^\ell,\rb{B_0^r}^C} + \Prob{A,G_0,\rb{B_0^\ell}^C,B_0^r}\\ &+ \Prob{A,G_0,\rb{B_0^\ell}^C,\rb{B_0^r}^C} + \ep \\
        &= \Prob{A,G_0, B_0} + 2\Prob{A,G_0,B_0^\ell,\rb{B_0^r}^C} + \Prob{A,G_0,\rb{B_0^\ell}^C,\rb{B_0^r}^C} + \ep,
    \end{aligned}
\end{equation}
where we used symmetry for the last equality. Now we further split using $G_1,G_{-1}$ and use symmetry:
\begin{equation}\label{mainArgument2}
    \begin{aligned}
        &\le \Prob{A,G_0, B_0} + 2\rb{\Prob{A,G_0,B_0^\ell,\rb{B_0^r}^C, G_1^C} + \Prob{A,G_0,B_0^\ell,\rb{B_0^r}^C, G_1}} \\ 
        &+ \Prob{A,G_0,\rb{B_0^\ell}^C,\rb{B_0^r}^C, G_1^C,G_{-1}} + \Prob{A,G_0,\rb{B_0^\ell}^C,\rb{B_0^r}^C, G_1,G_{-1}^C} \\
        &+ \Prob{A,G_0,\rb{B_0^\ell}^C,\rb{B_0^r}^C, G_1,G_{-1}} + \Prob{A,G_0,\rb{B_0^\ell}^C,\rb{B_0^r}^C, G_1^C,G_{-1}^C} + \ep\\
        &\le \ep_0 + 2\rb{\ep_0 + \Prob{A,G_0,\rb{B_0^r}^C, G_1}} \\ 
        &+ \Prob{A,G_0,\rb{B_0^\ell}^C,G_{-1}} + \Prob{A,G_0,\rb{B_0^r}^C, G_1} \\
        &+ \Prob{A,G_0,\rb{B_0^r}^C, G_1} + \ep_0 \\
        &= 4 \ep_0 + 5\Prob{A,G_0,\rb{B_0^r}^C, G_1} + \ep \\
        &\le 10\ep_0 + 5\Prob{A,G_0,B_1^\ell, G_1} + \ep,
    \end{aligned}
\end{equation}

using the bounds in \eqref{eq:boundsHelp}.

From here we can show the following by induction. For any $n\in \BN$:

\begin{equation}\label{inductionMain}
    \begin{aligned}
        \Prob{G_0} \le 10\sum_{i=0}^{n-1}{\ep_i} + 5\Prob{A,\bigcap_{i=0}^{n-1}{G_i},B_n^\ell, G_n} + \ep
    \end{aligned}
\end{equation}
The base case $n=1$ is shown above. As for the induction step, consider the following:
\begin{equation}\label{inductionStep}
    \begin{aligned}
        & \quad \ \Prob{A,\bigcap_{i=0}^{n-1}{G_i},B_n^\ell, G_n} \\
        &\le \Prob{A,\bigcap_{i=0}^{n-1}{G_i},B_n^\ell, B_n^r, G_n} + \Prob{A,\bigcap_{i=0}^{n-1}{G_i},B_n^\ell, \rb{B_n^r}^C, G_n} \\
        &\le  \ep_n + \Prob{A,\bigcap_{i=0}^{n-1}{G_i},B_n^\ell, \rb{B_n^r}^C, G_n} \\
        &\le  \ep_n + \Prob{A,\bigcap_{i=0}^{n-1}{G_i},B_n^\ell, \rb{B_n^r}^C, G_n, G_{n+1}^C} \\
        & \quad \quad \ + \Prob{A,\bigcap_{i=0}^{n-1}{G_i},B_n^\ell, \rb{B_n^r}^C, G_n, G_{n+1}} \\
        &\le 2\ep_{n} + \Prob{A,\bigcap_{i=0}^{n}{G_i},B_{n+1}^\ell, G_{n+1}} 
    \end{aligned}
\end{equation}
Applying this gives us the induction step.

Now we will show using Lemma \ref{lem:sizeBoundTime} that the bounds in \eqref{eq:boundsHelp} hold. We can see that:
\begin{equation}\label{eventTypes1}
    \begin{aligned}
        G_n \cap A \cap B_{n+1}^\ell \cap G_{n+1}^C 
        \subseteq \cb{T^{j_{n+1}}_{S_{j_{n+1}}^{j_n}} < t^\gamma,
        S_{j_{n+1}}^{j_n} \ge \zeta_n\rb{\Delta j_n}^2, \abs{\mathfs{C}_{j_{n+1}}\rb{T^{j_{n+1}}_{S_{j_{n+1}}^{j_n}}-}}\le \Delta j_n + \Delta j_{n+1}},
    \end{aligned}
\end{equation}

since if $\mathfs{C}_{j_{n+2}}$ and $\mathfs{C}_{j_{n+1}}$ don't connect, then $\mathfs{C}_{j_{n+1}}$ is at most of size $\Delta j_n + \Delta j_{n+1}$ until $\mathfs{C}_{j_n}$ and $\mathfs{C}_{j_{n+1}}$ connect. The combination of $A$ and $B_{n+1}^\ell$ gives a lower bound on the number of steps by $\mathfs{C}_{j_{n+1}}$ until connecting to $\mathfs{C}_{j_n}$. This can be seen in figure \ref{fig:cases}.

We can also see that:

\begin{equation}\label{eventTypes2}
    \begin{aligned}
        G_n \cap A \cap B_n 
        &\subseteq 
        \cb{T^{j_n}_{S_{j_n}^{j_{n+1}}}< t^\gamma,
        S_{j_n}^{j_{n+1}} \ge S_{j_n}^{j_{n-1}} \ge c_2^{n-1}\rb{\Delta j_{n-1}}^2, \abs{\mathfs{C}_{j_n}\rb{T_{c_2^n(\Delta j_{n-1})^2}^{j_n}-}}\le \Delta j_{n-1} + \Delta j_{n}},
    \end{aligned}
\end{equation}
by definition of $B_n$ combined with $A$. See figure \ref{fig:cases}.
The last observation is that:

\begin{equation}\label{eventTypes3}
    \begin{aligned}
        G_0 \cap A \cap G_{-1}^C \cap G_1^C 
        &\subseteq 
        \cb{T^{j_0}_{S^{j_1}_{j_0}}< t^\gamma,
        S_{j_0}^{j_1} \ge c_2^0\rb{\Delta j_0}^2, \abs{\mathfs{C}_{j_0}\rb{T^{j_0}_{S^{j_1}_{j_0}}-}}\le \Delta j_0 + \Delta j_1} \\
        &\cup
        \cb{T^{j_1}_{S^{j_0}_{j_1}} < t^\gamma,
        S_{j_1}^{j_0} \ge c_2^0\rb{\Delta j_0}^2, \abs{\mathfs{C}_{j_1}\rb{T^{j_1}_{S^{j_0}_{j_1}}-}}\le \Delta j_0 + \Delta j_{-1}}
    \end{aligned}
\end{equation}

Since $G_{-1}^C$ and $G_1^C$ give us a bound on the size of $C_{j_0}$ and $C_{j_1}$ in the same way as before, and the rest is due to $A$ combined with either $B_{j_0}^r$ or $B_{j_1}^\ell$ (Since one of the two always happens).

\begin{figure}[h]
    \centering
    \begin{tikzpicture}[scale=1]
        \draw[thick] (0,0) -- (6,0);
        \fill (1,0) circle (2pt) node[below] {$j_n$};
        \fill (3,0) circle (2pt) node[below] {$j_{n+1}$};
        \fill (5,0) circle (2pt) node[below] {$j_{n+2}$};
        \draw[->, thick] (2.8,0.25) -- (1.2,0.25);
        \draw[thick] (4,-0.3) -- (4.4,0.3);
    \end{tikzpicture}
    \hspace{1cm} 
    \begin{tikzpicture}[scale=1]
        \draw[thick] (0,0) -- (6,0);
        \fill (1,0) circle (2pt) node[below] {$j_{n-1}$};
        \fill (3,0) circle (2pt) node[below] {$j_n$};
        \fill (5,0) circle (2pt) node[below] {$j_{n+1}$};
        \draw[->, thick] (3.2,0.25) -- (4.8,0.25);
        \draw[->, thick] (2.8,0.25) -- (1.2,0.25);
    \end{tikzpicture}
    \caption{The relevant cases for the bounds. The arrows show who needs to walk more steps, while the slash shows that a connection was not created.}
    \label{fig:cases}
\end{figure}
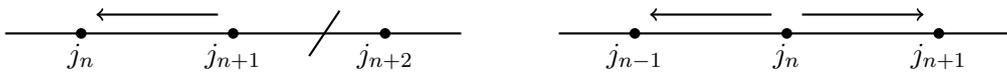

Now we can bound the probability of these events. We will show for the first case, the other cases follow similarly:
\begin{equation}\label{eq:boundingEvent}
    \begin{aligned}
        &\Prob{T^{j_{n+1}}_{S_{j_{n+1}}^{j_n}} < t^\gamma,
        S_{j_{n+1}}^{j_n} \ge \zeta_n\rb{\Delta j_n}^2, \abs{\mathfs{C}_{j_{n+1}}\rb{T^{j_{n+1}}_{S_{j_{n+1}}^{j_n}}-}}\le \Delta j_n + \Delta j_{n+1}}
        \le \\ 
        &\Prob{\sum_{i=1}^{\zeta_n\rb{\Delta j_n}^2}{t_i^{j_{n+1}}} \le t^\gamma, \abs{C_{j_n}\rb{\sum_{i=1}^{\zeta_n\rb{\Delta j_n}^2}{t_i^{j_{n+1}}}-}} \le \Delta j_n + \Delta j_{n+1}}
        \le \\ 
        &\Prob{\sum_{i=1}^{\zeta_n\rb{\Delta j_n}^2}{\rb{\Delta j_n + \Delta j_{n+1}}^\alpha \Tilde{t}_i^{j_{n+1}}} \le t^\gamma} 
        = \\ 
        &\Prob{\oov{\zeta_n\rb{\Delta j_n}^2}\sum_{i=1}^{\zeta_n\rb{\Delta j_n}^2}{\Tilde{t}_i^{j_{n+1}}} \le \frac{\rb{\Delta j_n + \Delta j_{n+1}}^{-\alpha}}{\zeta_n\rb{\Delta j_n}^2}t^{\gamma}},
    \end{aligned}
\end{equation}

using \eqref{slow<0} from Lemma \ref{lem:sizeBoundTime} in the second inequality. Now we evaluate the RHS of the last event in \eqref{eq:boundingEvent}. 
Write $\Delta j_n = d_n t$:

\begin{equation}
    \begin{aligned}
        \frac{\rb{\Delta j_n + \Delta j_{n+1}}^{-\alpha}}{\zeta_n\rb{\Delta j_n}^2}t^{\gamma}  
        &= \frac{\rb{d_n + d_{n+1}}^{-\alpha}t^{-\alpha}}{\zeta_n d_n^2 t^2}t^\gamma 
        = \frac{\rb{d_n + d_{n+1}}^{-\alpha}}{\zeta_n d_n^2}t^{\gamma-(2+\alpha)} \\
        &= \frac{\rb{d_n + d_{n+1}}^{-\alpha}}{\zeta_n d_n^2},
    \end{aligned}
\end{equation}
since $\gamma = 2+\alpha$.

We can see that $\zeta_n$ by \eqref{boundEventsA} must satisfy:
\begin{equation}
    \begin{aligned}
        1-e^{-c_g\frac{1}{\zeta_n}} &\ge 1-\ep_n\\
        e^{-c_g\frac{1}{\zeta_n}} &\le \frac{\ep}{n^2} \\
        \frac{1}{\zeta_n} &\ge -c_g\log{\frac{\ep}{n^2}} = c_g\log{\frac{n^2}{\ep}} \\
        \zeta_n &\le \frac{c_g}{\log{\frac{n^2}{\ep}}} = \frac{c_g}{2\log{n}+\log{\ep^{-1}}}
    \end{aligned}
\end{equation}
so we can take $\zeta_n = \frac{g_\ep}{\log{n}}$, where $g_\ep = \frac{c_g}{\log{\ep^{-1}}}$ is a constant small enough which only depends on $\ep$. The condition $\Delta t_i \ge 1$ is also trivial since we can consider $t>1$, and then $\Delta t_i \ge 1$. We can see that setting $d_n = g_d n$ where $g_d$ is a large enough constant is sufficient. For $\Delta_{j_i}$ we additionally require
\begin{equation}
    \begin{aligned}
        \frac{(d_n+d_{n+1})^{-\alpha}\log{(n)}}{g_\ep d_n^2} \le \frac{1}{2}.
    \end{aligned}
\end{equation}
If we plug in $d_n = g_d n$, and use $\log{n} \le g_\alpha n^{\frac{2+\alpha}{2}}$, where $g_\alpha$ is a constant which only depends on $\alpha$
\begin{equation}
    \begin{aligned}
        \frac{g_d^{-\alpha}(2n+1)^{-\alpha}\log{(n)}}{g_\ep g_d^2 n^2} \le \frac{g_\alpha g_d^{-(\alpha+2)} 2^{-\alpha}}{g_\ep}n^{-\frac{2+\alpha}{2}} \le \frac{g_\alpha g_d^{-(2+\alpha)}2^{-\alpha}k}{g_\ep} \le \frac{1}{2}
    \end{aligned}
\end{equation}
where we take $g_d = c_\alpha \log{\ep^{-1}}$ with constant $c_\alpha$ (which may depend only on $\alpha$) large enough such that this condition holds too. We also take $g_d$ to be large enough to satisfy $d_n \ge \rb{\frac{\log{n}}{g_\ep}}^2$. Note that $\alpha+2 \ge 0$, so in order for these condition to hold, we need to take $g_d$ large. Now using Bernstein's inequality:
\begin{equation}
    \begin{aligned}
        \Prob{\oov{\zeta_n\rb{\Delta j_n}^2}\sum_{i=1}^{\zeta_n\rb{\Delta j_n}^2}{\Tilde{t}_i^{j_{n+1}}} \le \frac{\rb{\Delta j_n + \Delta j_{n+1}}^{-\alpha}}{\zeta_n\rb{\Delta j_n}^2}t^{\gamma}}
        &\le
        \Prob{\oov{\zeta_n\rb{\Delta j_n}^2}\sum_{i=1}^{\zeta_n\rb{\Delta j_n}^2}{\Tilde{t}_i^{j_{n+1}}} \le \frac{1}{2}} \\
        &=
        \Prob{\oov{\zeta_n\rb{\Delta j_n}^2}\sum_{i=1}^{\zeta_n\rb{\Delta j_n}^2}{\Tilde{t}_i^{j_{n+1}}} -1 \le -\frac{1}{2}} \\
        &\le e^{-c_g \zeta_n\rb{\Delta j_n}^2} \\
        &=   e^{-c_g \frac{g_\ep}{\log{(n)}} g_d^2 n^2 t^2} \\
        &\le \frac{\ep}{n^2} = \ep_n
    \end{aligned}
\end{equation}
for $t>T$ for some constant $T$ and for all n.

Now, continuing from \eqref{inductionMain} and taking $n \to \infty$, we are left with:
\begin{equation}
    \begin{aligned}
        \Prob{G_0} &\le 10\sum_{i=0}^{\infty}{\ep_i} + 5\Prob{A,\bigcap_{i=0}^{\infty}{G_i},\bigcap_{i=0}^{\infty}{\rb{D_i}^C},B_n^\ell, G_n} + \ep \\
        &\le c_g\ep + c_g \Prob{\bigcap_{i=0}^{\infty}{G_i}}.
    \end{aligned}
\end{equation}
Since the rates are bounded from above by $M$, the process is well defined and $\Prob{\bigcap_{i=0}^{\infty}{G_i}} = 0$, which finishes the proof.

\end{proof}

We now give a construction of the process for $-2<\alpha<0$. We consider the series of processes $\cb{\mathfs{A}^M}_{M\in \BN}$. We couple the process by coupling the clocks and the random walks of each cluster. This idea is based on the process discrepancy rates of \cite{procaccia2020stationary, mu2022scaling}.

\begin{theorem}\label{thm:negative alpha limit construction}
    There exists a subsequence $M_k$ s.t $\mathfs{A}^{M_k} \to \mathfs{A}$ as $k \to \infty$ a.s on compacts in time and space. 
\end{theorem}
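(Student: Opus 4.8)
The plan is to realize all the truncated processes $\mathfs{A}^M$ on one probability space through the coupling just described — a common family of rate‑$1$ Poisson clocks and $\pm1$ increment sequences, time‑changed by the (capped) cluster rates exactly as in the coupling used for the $\alpha\ge0$ lower bound — and to show that, on events of probability arbitrarily close to $1$, the restriction of $\mathfs{A}^M$ to any fixed window $[-L,L]\times[0,T]$ is eventually constant in $M$. Since the rates of $\mathfs{A}^M$ are bounded by $M$ and the interaction is nearest–neighbour, each $\mathfs{A}^M$ is itself well defined by a graphical construction: only clusters starting within distance of order $MT$ of $[-L,L]$ can influence that window by time $T$.

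First I would prove a uniform‑in‑$M$ localization estimate. Fix $T,L>0$ and $\ep>0$. Lemma~\ref{lem:M bounded size alpha<0} is uniform in $M$, so, applied with error $\ep/(2R+1)$ together with translation invariance and a union bound, it yields $B=c_\alpha\log(R/\ep)\,T^{1/(\alpha+2)}$ such that, with probability $\ge 1-\ep/2$ and uniformly in $M$, every cluster $\mathfs{C}_y(s)$ with a particle in $[-R,R]$ and $s\le T$ has size $\le B$. The crucial elementary point is that a cluster of size $\le B$ moves at rate $\le B^{-\alpha}$ \emph{regardless of $M$} — the cap can only lower the rate — so (by a time change as in Lemma~\ref{lem:sizeBoundTime}) the number of moves performed by any such cluster up to time $T$ is stochastically dominated by a $\mathrm{Poisson}(B^{-\alpha}T)$ variable, again uniformly in $M$; enlarging the exceptional set to probability $\ge1-\ep$ we obtain a displacement bound $K$ of the same order, uniform in $M$. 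Choosing $R=R(T,L,\ep)$ with $R>L+B+K$ — possible since $B$ and $K$ grow only polylogarithmically in $R$ — forces the evolution of $\mathfs{A}^M$ inside $[-L,L]\times[0,T]$ to depend only on the clusters started in $[-R,R]$, all of which remain of size $\le B$ and perform finitely many moves there. This is a finite‑speed‑of‑propagation statement whose only probabilistic input is the $M$‑free bound of Lemma~\ref{lem:M bounded size alpha<0}.

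Next I would run a bootstrap in $M$. Put $M_0=2B^{-\alpha}$. On the good event the clusters governing the window have size $\le B$, hence rate $|\mathfs{C}|^{-\alpha}\le B^{-\alpha}<M_0$, so for $\mathfs{A}^{M_0}$ the cap is never active there and $\mathfs{A}^{M_0}$ follows the uncapped dynamics. For $M'\ge M_0$ let $\rho$ be the first time the restriction of $\mathfs{A}^{M'}$ to the window differs from that of $\mathfs{A}^{M_0}$. Before $\rho$ the two processes see identical clocks, identical increments, and — since only size‑$\le B$ clusters are involved and the cap is inactive for both — identical rates, so they agree on the window up to $\rho^{-}$; at $\rho$ the process $\mathfs{A}^{M_0}$ would then also have to produce a new event there, which is impossible because on the good event $\mathfs{A}^{M_0}$ never exceeds size $B$, never travels more than $K$, and receives no incoming cluster. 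Hence $\rho>T$, and $\mathfs{A}^{M'}$ coincides with $\mathfs{A}^{M_0}$ on $[-L,L]\times[0,T]$ for every $M'\ge M_0$. Applying this with $L=T=n$ and a summable sequence $\ep_n$, Borel--Cantelli produces an almost sure event on which, for each $n$, the restriction of $\mathfs{A}^M$ to $[-n,n]\times[0,n]$ stabilizes once $M\ge M_0(n)$; calling $\mathfs{A}$ the common limit gives $\mathfs{A}^M\to\mathfs{A}$ almost surely on compacts in time and space along the whole sequence, and a fortiori along any subsequence $M_k\to\infty$. Since the cap is inactive on every window for $M$ large, $\mathfs{A}$ obeys the cluster–cluster dynamics with rate $|\mathfs{C}|^{-\alpha}$, so this also constructs the model for $-2<\alpha<0$.

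I expect the main obstacle to be the uniform‑in‑$M$ localization: ruling out, at a rate independent of the truncation level, that distant clusters influence a fixed window in bounded time. Everything hinges on two facts — that the size bound of Lemma~\ref{lem:M bounded size alpha<0} holds with a constant independent of $M$, and that controlling a cluster's size automatically controls its rate independently of $M$, which is exactly what converts the size estimate into the displacement estimate needed for finite speed of propagation. Granted this, the graphical construction of each $\mathfs{A}^M$, the finite‑jump bookkeeping, and the bootstrap are routine.
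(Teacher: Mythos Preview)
Your identification of the $M$-uniform size bound from Lemma~\ref{lem:M bounded size alpha<0} as the decisive input, the coupling via common rate-$1$ clocks, and the Borel--Cantelli finish are all in line with the paper. The gap is in the bootstrap paragraph.

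You set $\rho$ to be the first time $\mathfs{A}^{M'}$ differs from $\mathfs{A}^{M_0}$ \emph{inside} $[-L,L]$, and claim that before $\rho$ the two processes have identical rates because ``only size-$\le B$ clusters are involved.'' But the size bound is an assertion about $\mathfs{A}^{M_0}$ on the event $G_{M_0}$; it says nothing about $\mathfs{A}^{M'}$. Concretely: under the larger cap $M'$, a cluster far to the right of $R$ can grow large and sweep left; when it merges with the cluster of particle $R$, the cluster containing $R$ in $\mathfs{A}^{M'}$ becomes large while in $\mathfs{A}^{M_0}$ it stays $\le B$. At this moment nothing inside $[-L,L]$ has changed yet, so $\rho$ has not occurred, yet the cluster straddling the right edge already has different sizes --- hence different rates --- in the two processes. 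The first event this produces inside $[-L,L]$ is a clock ring in $\mathfs{A}^{M'}$ with no counterpart in $\mathfs{A}^{M_0}$; your sentence ``$\mathfs{A}^{M_0}$ would then also have to produce a new event there'' is precisely where the argument fails. Even intersecting with $G_{M'}$ does not rescue the fixed-window bootstrap, because agreement on $[-L,L]$ (or on $[-R,R]$) does not determine the full size of a boundary-straddling cluster.

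What is missing is a bound on how fast the discrepancy front travels inward, and this is what the paper provides instead of a static light cone. The paper places the discrepancy index at $\pm K_M$ with $K_M\gtrsim M^2$, observes that on the good event every cluster between there and the origin has size $\le M$ and hence moves at rate $\le M^{-\alpha}$, and concludes that the discrepancy needs time $\gtrsim M^{2}\cdot M^{\alpha}=M^{2+\alpha}\to\infty$ (this is where $\alpha>-2$ enters) to reach $[-K,K]$. Your programme can be repaired along these lines --- track the leftmost particle index at which $\mathfs{A}^{M_0}$ and $\mathfs{A}^{M'}$ disagree and bound its inward speed --- but that is the discrepancy argument, not the self-contained window bootstrap you sketched.
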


\begin{proof}
Fix $T>0, K>0, \ep>0$, and take $\ep_n = \frac{\ep}{n^2}$. We denote by $c_v$ any constant that depends only on $\ep,K,T,\alpha,p$ (similarly to $c_g$). By Lemma \ref{lem:M bounded size alpha<0} and translation invariance, for any choice of $\cb{j_n}_{n \in \BN}$, $\forall t<T$:
\begin{equation}
    \Prob{\abs{\mathfs{C}_{j_n}(t)} \le c_v\log{n}} \ge 1-\ep_n.
\end{equation}
Applying a union bound, this gives
\begin{equation}\label{eq:globally small size negative alpha}
    \Prob{\forall n: \abs{\mathfs{C}_{j_n}(t)} \le c_v\log{n}} \ge 1-c_g\ep.
\end{equation}
Taking $j_n = \sum_{i=1}^{n}{c_v\log{i}}$ gives that:
\begin{equation}\label{eq:globally all small size negative alpha}
    \Prob{\forall n, \forall \abs{i} \le j_n: \abs{\mathfs{C}_{i}(t)} \le c_v\log{n}} \ge 1-c_g\ep,
\end{equation}
since the differences $\Delta j_n := j_{n+1} - j_n = c_v\log{n+1}$, hence if there was a cluster $\mathfs{C}_i, j_n \le i \le j_{n+1}$, having $\abs{\mathfs{C}_i(t)} \ge \Delta j_n$ for $t<T$, it would have to be connected to either $\mathfs{C}_{j_n}$ or $\mathfs{C}_{j_{n+1}}$. By \eqref{eq:globally small size negative alpha} this happens with probability at most $\ep$. Denote the event in \eqref{eq:globally all small size negative alpha} by $G_C$.
Now, For any $M>0$ consider taking 
\begin{equation}\label{eq:K_M definition}
    i_M := \argmax_{n>0}\cb{c_v\log{n} \le M}, K_M = j_{i_M}.
\end{equation}
From this definition we can see that
\begin{equation}
    n = e^{c_vM}
\end{equation}
\begin{equation}
    K_M = \sum_{i=1}^{e^{c_vM}}{c_v\log{n}} \ge \sum_{i=1}^{c_vM}{c_vn} \ge c_vM^2.
\end{equation}

Now fix $M \le M_1 < M_2$. 
Define the left discrepancy index $D_\ell(t)$ for all $t<T$ inductively. 
We start with $D_\ell(0) = -K_M$. Then given $D_\ell(t')$ for $t' \le t$, define $\tau = \argmin_{t>0}\cb{\con{\mathfs{C}^{M_1}_{D_\ell(t)}}{\mathfs{C}^{M_1}_{D_\ell(t)+1}}{t} \ or \ \con{\mathfs{C}^{M_2}_{D_\ell(t)}}{\mathfs{C}^{M_2}_{D_\ell(t)+1}}{t}}$ that is, the minimum between the connection time of $\mathfs{C}_{D(t)}$ and $\mathfs{C}_{D(t)+1}$ in process $\mathfs{A}^{M_1}$ and between the same connection time in process $\mathfs{A}^{M_2}$. 
The upper index $\mathfs{C}^{M}_\cdot(\cdot)$ is used to indicate which process this R.V belongs to. 
We set $D_\ell(t') = D_\ell(t)$ for all $t \le t' < \tau$ and $D_\ell(\tau) = D_\ell(t)+\abs{{\mathfs{C}^{M_1}_{D_\ell(t)+1}}(t)}$.

Note that under $G_C$, the two processes $\mathfs{A}^{M_1}$ and $\mathfs{A}^{M_2}$ are identical in the interval $\bb{\max_{M' \in \cb{M_1,M_2}}\cb{X^{M'}_{D_\ell(t)}(t)+1},\min_{M' \in \cb{M_1,M_2}}\cb{X^{M'}_{D_r(t)}(t)-1}}$, by definition of the coupling. This gives that the processes are identical for $\forall t<T$ in the interval:
\begin{equation}
    \bb{\max_{t\le T, M'\in \cb{M_1,M_2}}\cb{X^{M'}_{D_\ell(t)}(t)+1},\min_{t\le T,M'\in \cb{M_1,M_2}}\cb{X^{M'}_{D_r(t)}(t)-1}}.
\end{equation}
Now Define 
\begin{equation}
    \begin{aligned}
        &L(t) := \max_{t'\le t, M'\in \cb{M_1,M_2}}\cb{X^{M'}_{D_\ell(t')}(t')+1 - D_\ell(t')} \\
        &U(t) := \min_{t'\le t, M'\in \cb{M_1,M_2}}\cb{X^{M'}_{D_r(t')}(t')-1 - D_r(t')},
    \end{aligned}
\end{equation}
and notice that $U(t),L(t)$ are Poisson point processes with rate at most $M^\alpha$.
This is because for $L(t)$ to advance, cluster $\mathfs{C}_{D_\ell(t)}$ must move in one of the processes, which happens with rate smaller than $M^\alpha$ in both processes. Since both processes are coupled using the same clocks, the combined rate is still smaller than $M^\alpha$. Once the cluster $\mathfs{C}_{D_\ell(t)}$ moves, $L(t)$ changes by 1 or 0 due to the move. It's important to notice that if the move also causes a connection from the right, the connection will not effect $L(t)$, since the change in $D_\ell(t)$ which results from the connection is exactly the same as the change in $\max_{M' \in \cb{M_1,M_2}}\cb{X^{M'}_{D_\ell(t)}(t)}$. Symmetrically the same arguments hold for $U(t)$.

By \eqref{hoeffdingBinom}, for a sufficiently small constant $c_v$
\begin{equation}\label{eq:interval M^2 steps}
    \Prob{L(0) \ge -c_vM^2} \le \ep.
\end{equation}
Now if we combine this with the bound on the rate, denoting $X_i \sim exp(1)$ i.i.d (so $M^{-\alpha}X_i \sim exp(M^\alpha)$), we get
\begin{equation}\label{eq:discrepancy slow1}
    \begin{aligned}
        \Prob{L(t) \ge -K} 
        &\le \ep + \Prob{\sum_{i=1}^{c_vM^2-K}{M^{\alpha}X_i} \le T} \\
        &\le \ep + \Prob{\frac{1}{c_vM^2}\sum_{i=1}^{c_vM^2}{X_i} \le c_vM^{-\alpha-2}} \\
        &\le 2\ep 
    \end{aligned}
\end{equation}
By WLLN, since $-(\alpha + 2) < 0$, so for big enough $M$, we have $c_vM^{-\alpha-2} \le \frac{1}{2}$.

Combining the equations, we have that
\begin{equation}
    \begin{aligned}
        \Prob{[-K,K] \ \text{is the same in both processes} \ \forall t<T} &\ge \Prob{[L(t),U(t)]\subseteq[-K,K],G_C} \\
        &\ge 1-c_g\ep.
    \end{aligned}
\end{equation}
Since for any $\ep>0$ exists $M$ large enough for this to hold for any $M_1,M_2>M$, if we take a subsequence with indexes increasing fast enough, by the Borel Cantelli lemma, this subsequence converges to a limit a.s.

\end{proof}



\section*{Acknowledgments}
The authors would like to thank Gidi Amir for introducing us to the model and questions. We would also like to thank Dominik Schmid for fruitful discussions and ideas. This project was partally supported by DFG grant 5010383.

\bibliography{ri}
\bibliographystyle{plain}

%
%



\end{document}